\documentclass[hidelinks,onefignum,onetabnum]{siamart251216}

\usepackage{lipsum}
\usepackage{amsfonts}
\usepackage{xcolor}
\usepackage{graphicx}
\usepackage{epstopdf}
\usepackage{algorithmic}
\ifpdf
  \DeclareGraphicsExtensions{.eps,.pdf,.png,.jpg}
\else
  \DeclareGraphicsExtensions{.eps}
\fi

\newsiamremark{remark}{Remark}
\newsiamremark{hypothesis}{Hypothesis}
\crefname{hypothesis}{Hypothesis}{Hypotheses}
\newsiamthm{claim}{Claim}
\newsiamremark{fact}{Fact}
\crefname{fact}{Fact}{Facts}

\headers{Balanced Active Subspaces for Optimal Control}{Emmanuel S. Ameh, Samuel E. Otto}

\title{Balanced Active Subspaces for Optimal Control \\
via Trajectory Optimization%
\thanks{\funding{This work was funded by the Department of Mechanical and Aerospace Engineering, Cornell University.}}}

\author{Emmanuel S. Ameh\thanks{Department of Chemical and Biomolecular Engineering
  (\email{esa62@cornell.edu}, \url{https://amehhh.github.io/}).}
\and Samuel E. Otto\thanks{Department of Mechanical and Aerospace Engineering
  (\email{seo34@cornell.edu}, \url{https://sites.google.com/view/ottolab/home}).}}

\usepackage{amsmath}
\usepackage{amssymb}

\DeclareMathOperator{\Tr}{Tr}
\DeclareMathOperator{\rank}{rank}

\DeclareMathOperator{\diag}{diag}

\DeclareMathOperator{\Range}{Range}

\DeclareMathOperator{\grad}{\nabla}

\DeclareMathOperator*{\E}{\mathbb{E}}

\newcommand{\R}{\mathbb{R}}

\usepackage{tikz}
\usetikzlibrary{arrows.meta, positioning}
\usetikzlibrary{calc}
\usepackage{float}
\usepackage{placeins} % provides \FloatBarrier to keep floats within sections
\usepackage{mathtools}
\usepackage[caption=false]{subfig}
\usepackage[percent]{overpic}

\ifpdf
\hypersetup{
  pdftitle={Data-Driven Trajectory-Optimization-Based Model Reduction via Balanced States and Costates for Optimal Control of Fluid Flows},
  pdfauthor={A.E.Sunday, Samuel Otto}
}
\fi

\begin{document}

\maketitle

% REQUIRED
\begin{abstract}
The curse of dimensionality associated with solving the Hamilton–Jacobi–Bellman equation for the value function remains a key obstacle for real-time optimal feedback control of high-dimensional nonlinear dynamical systems. This is especially severe in aerospace applications such as relaminarizing unsteady turbulent flows and recovering from aerodynamic stall where nonlinear dynamics play a significant role. We introduce Balanced Active Subspaces for Optimal Control (BASOC), a model reduction framework which seeks to find directions in state space along which the value function is most sensitive, and the distribution of states exhibits maximum variance along optimal trajectories. BASOC leverages trajectory optimization and the Pontryagin minimum principle to obtain state and co-state (value  function gradient) data along locally optimal trajectories for constructing state and gradient covariance matrices. Balancing these matrices provides an oblique projection for approximating the value function, which can also be used to construct reduced-order models and surrogate feedback control laws. We show that for linear-quadratic optimal control problems, BASOC becomes equivalent to a snapshot method for initial condition balancing of the optimally controlled closed-loop realization. The framework is demonstrated on a nonlinear three-dimensional system where POD and balanced truncation are known to perform poorly, and on the nonlinear complex Ginzburg–Landau equation to demonstrate feasibility for larger-scale systems.
\end{abstract}

% REQUIRED
\begin{keywords}
  optimal control, value function, surrogate model , active subspace , balanced truncation, adjoints, covariance matrices , HJB equation.
\end{keywords}

% REQUIRED
\begin{MSCcodes}
49L12, 49K15, 49N35, 76D55, 93A15, 93C10 
\end{MSCcodes}

\section{Introduction}
Optimal control seeks to modify the behavior of a dynamical system by finding inputs that minimize a cost function measuring the quality of the system's behavior.
The optimal control problem leads to the Hamilton-Jacobi-Bellman (HJB) equation whose solution---the value function---provides the optimal cost-to-go from every state of the dynamical system. The value function is a key object as it allows the optimization problem over the entire input sequence to be converted into a simpler feedback law.
However, the extreme computational cost of representing, computing, and storing accurate approximations of the value function obstructs applications of optimal control and reinforcement learning to high-dimensional nonlinear systems such as turbulent separated flows governing aerodynamic stall recovery and flow relaminarization in aerospace applications \cite{Gad-el-Hak_2000,AamoKrstic2003}. This motivates the development of methods that can uncover and exploit low-dimensional structure in the HJB value function without computing it directly.

For continuous-time, infinite-horizon linear-quadratic regulator (LQR) optimal control problems involving linear dynamics, it is well known that the value function from any state configuration $x \in \mathbb{R}^n$ admits a closed-form expression $V(x) = x^T\Pi x$ \cite{AndersonMoore1990} and is characterized by $\Pi \succeq 0$, the stabilizing solution to the algebraic Riccati equation. Under assumptions such as a small number of inputs $m$ and outputs $p$ relative to the state dimension $n$, in addition to the spectrum of the controlled system matrix being far from its reflection about the imaginary axis, $\Pi$ has rapidly decaying singular values \cite{Stillfjord2018,benner2020comparison}. As a result, the value function for LQR problems can be accurately approximated as a function of a small number of active variables in a low-dimensional subspace (i.e., in an active subspace) \cite{Zahm2020,bigoni2022nonlinear}. This observation motivates the two fundamental research questions that we begin to explore in this paper: Do low-dimensional active subspaces and variables exist for the value function of nonlinear shear-dominated fluid flow problems, and if so, can they be exploited for control? 

To address these questions, we introduce the Balanced Active Subspaces for Optimal Control (BASOC) framework. BASOC seeks to identify an oblique projection operator for dimensionality reduction of the time-independent value function (cost-to-go) that jointly captures directions in state space along which the value function is most sensitive, and the distribution of states exhibits large variance. Our approach builds on the active subspace framework, which applies proper orthogonal decomposition (POD) to gradients of a scalar-valued function sampled according to a user-prescribed probability distribution to reveal an active subspace \cite{Lukaczyk2014}, i.e., a subspace spanned by directions along which the function predominantly varies. The active subspace method has been employed in numerous challenging dimensionality reduction and optimal design problems in engineering and science. Zahm et al. \cite{Zahm2020} generalize the active subspace method to multivariate vector-valued functions and identify a projection that balances sensitivity information against the variance of points in the domain of interest. The connection with balancing transformations was identified in \cite{Otto2023model} and applied to dimensionality reduction of large-scale dynamical systems by computing balanced active state variables for nonlinear observability operators, yielding a snapshot-based method called Covariance Balancing Reduction using Adjoint Snapshots (CoBRAS). For linear systems forced by white noise, the state and gradient covariances recover the controllability and observability Gramians, and CoBRAS recovers the Balanced Proper Orthogonal Decomposition (BPOD) \cite{Rowley2005} approximation of Balanced Truncation (BT) \cite{moore1981pca}. 

BASOC exploits the fact that the costate variables provided by the Pontryagin minimum principle (PMP) \cite{Pontryagin1962} are generalized gradients of the HJB value function along optimal trajectories \cite{ClarkeVinter1987,CannarsaFrankowska2017}. This connection between the HJB equation and PMP enables us to employ an indirect forward-backward sweep method (FBSM) of trajectory optimization to obtain state and gradient samples of the optimal value function along optimal trajectories, which BASOC then uses to compute balanced active variables and projections similarly to \cite{Otto2023model}. We note that in practice FBSM only computes locally optimal trajectories, so the gradient samples we obtain are actually from a state-constrained optimal value function. We explain how the resulting BASOC variables and oblique projections can be used to build surrogate control laws for full-state feedback control and projection-based ROMs of the dynamical system. For LQR problems, we show that BASOC provides a snapshot-based method for initial condition balancing \cite{Beattie2017} of the optimally controlled system. We evaluate BASOC-based reductions for value function approximation, model reduction, and downstream control tasks on a three-dimensional nonlinear system where POD and BT are known to struggle, and on the nonlinear complex Ginzburg--Landau equation.
We compare to reductions based on POD, BT, and another new reduction based on POD applied directly to optimal costates, which we call the POD-Adjoint method. Among these methods, only BASOC and BT respect arbitrary linear state transformations owing to their constructions via balancing.

\subsection{Related work}
Several other reduced basis methods have been introduced to address the computational burden of solving HJB equations. A related approach \cite{DolgoviKaliseSaluzzi2023} proposes a statistical POD method along optimal state trajectories for semilinear systems with random samples of initial conditions. The resulting reduced optimal feedback law is approximated in the POD subspace using tensor trains and evaluated cheaply online for near-optimal control. Although effective in this setting, the POD basis captures dominant state variance in the state space and approximates states, but not the value function, which may have steep gradients in directions that are not explored by the dominant state trajectories. A related alternative is to employ POD to obtain a ROM of the dynamical system via a Galerkin projection. By significantly reducing the degrees of freedom, the resulting HJB equation becomes computationally tractable \cite{KunischVolkweinXie2004}. However, this method can fail to capture low-variance, yet dynamically important (highly observable) states that are often important for modeling the dynamics of shear-dominated fluid flows such as boundary layers, shear layers, and wakes \cite{chomaz2005global, trefethen1993hydrodynamic, Schmid2001}. Near an equilibrium this problem can be addressed by applying oblique projection operators such as those obtained from balanced truncation or its snapshot-based approximation using BPOD \cite{moore1981pca, Rowley2005, Ilak2008modeling}. For nonlinear systems, a theoretical framework for balanced truncation around a stable equilibrium was developed by Scherpen \cite{Scherpen1993, fujimoto2010balanced}. While progress has been made towards scalable nonlinear balancing through tensor-based computations \cite{kramer2024scalable}, the same curse of dimensionality still arises unless appropriate low-dimensional structures can be identified and exploited. BASOC provides a tool to reveal a simple kind of low-dimensional active variable structure in the value function that could potentially also be applied to the energy functions in nonlinear balancing. Alla et al. \cite{Alla2017HJBMOR} review model reduction methods for HJB equations and suggest constructing a ROM via a POD basis generated from adjoint-equation simulations derived from optimality conditions. However, the adjoint equation is computed along non-optimal trajectories generated from random initial conditions and control inputs. Using optimal trajectories would recover our POD Adjoint method, which computes an active subspace of the value function. Adjoint POD presents the additional challenge that states are not necessarily well-approximated in the span of the resulting basis, making the approach poorly suited to constructing ROMs of dynamics and reduced-order observers.

Beyond reduced-basis methods, a variety of numerical approaches exploit alternative forms of structure to reduce the computational burden of solving HJB equations. One approach \cite{DolgovKaliseKunisch2021} suggests using low-rank tensor decomposition for the solutions of HJB PDEs. Deep learning approaches \cite{HanJentzenE2018} reformulate the HJB equation as a backward stochastic differential equation and employ a deep-learning-based method for approximating its solution. Another direction \cite{DarbonOsher2016} \cite{ChowDarbonOsherYin2019} further employs a grid-free algorithm for solving certain types of HJB equations with convex and state-dependent Hamiltonians based on the generalized Hopf--Lax formula.

\section{Preliminaries on Optimal Control} 

We consider the Lagrange form of the infinite-horizon optimal control problem in continuous time. The control problem is to find a control function $u(\cdot)$ that minimizes the undiscounted infinite-horizon cost functional
\begin{equation}
     J_{x_0}(u(\cdot)) := \int_{0}^{\infty} \mathcal{L}(x(t), u(t))\, dt,
     \label{eq:Lagrange_cost_functional}
\end{equation}
subject to the dynamics
\begin{equation}
    \begin{cases}
        \dot{x}(t) = f(x(t) , u(t)), & \forall t \in [0, \infty), \\
        y(t) = h(x(t),u(t)), & \forall t \in [0, \infty),  \\
        x(0) = x_{0}.
    \end{cases}
    \label{eq:Continous_time_dynamics}
\end{equation}
The evolution of the state $x(t) \in \mathbb{R}^n$ is governed by the nonlinear dynamics $f: \mathbb{R}^n  \times U\to \mathbb{R}^n$, where $U \subseteq \mathbb{R}^m$ denotes the set of admissible control values, with admissible control functions $ \mathcal{U}_{\mathrm{ad}} = \left\{ u:[0,\infty)\rightarrow U \;\middle|\; \mbox{$u$ measurable} \right\}$ and $\mathcal{L}:\mathbb{R}^n\times U\rightarrow\mathbb{R}$ measuring the running cost.
We envision the situation where the initial condition $x_0$ is an undesirable state, such as stalled flow over an aircraft wing, from which we would like the system to optimally recover in real time.
While it does not participate in the stated optimal control problem, our system description includes an output $y(t) \in \mathbb{R}^p$ given by a map $h: \mathbb{R}^n \times \mathbb{R}^m \to \mathbb{R}^p$ that we use later to design output-based compensators.

We assume that $f$ is continuous in $(x,u)$ and locally Lipschitz in $x$, uniformly with respect to $u\in U$, and that the resulting controlled trajectories exist on $[0,\infty)$
\cite{Bardi1997}. For initial states with finite optimal
cost, the value function is defined by
\begin{equation}
    V(x)=\inf_{u\in\mathcal U_{\mathrm{ad}}}
    J_x(u(\cdot)).
    \label{eq:Value_function_definition}
\end{equation}
Assuming the infimum is attained by an optimal control function $u^*(\cdot)\in\mathcal{U}_{\mathrm{ad}}$,  we obtain $V(x)=J_{x}(u^*(\cdot))$ \cite{vinter2010optimal}.
% Under [assumptions], the dynamic programming principle yields that the value function is the unique viscosity solution of the stationary Hamilton-Jacobi-Bellman equation
% \begin{equation}
%     \inf_{u} \left\{ \mathcal{L}(x,u) + \nabla V(x)^{\top} f(x,u) \right\} = 0.
%     \label{eq:HJB_equation}
% \end{equation}
The dynamic programming principle relates the value function to the stationary Hamilton--Jacobi--Bellman (HJB) equation
\begin{equation}
    \inf_{u\in U}
    \left\{
        \mathcal{L}(x,u)
        +\nabla V(x)^{\top}f(x,u)
    \right\}=0.
    \label{eq:HJB_equation}
\end{equation}
When $V(x)$ is continuously differentiable, this equation is understood in the classical sense and $\nabla V$ is the gradient of the optimal value function with respect to the state. In general, the value
function need not be differentiable, and the HJB equation must instead be interpreted in the viscosity sense
\cite{vinter2010optimal}.
If $V\in C^1(\mathbb{R}^n)$ is known, then an optimal input at any state $x \in \mathbb{R}^n$ is given by a feedback law \cite{Liberzon2012}
\begin{equation}
    u^*(x)=\kappa(x) \in \arg\min_{u\in U} \left[\mathcal{L}(x,u) + \nabla V(x)^T f(x,u) \right].
    \label{eq:optimal_control_argument}
\end{equation}

\begin{remark}
    We consider the infinite-horizon setup because it yields a value that depends only on the state.
    This is the simplest setting for BASOC, though generalization to finite-horizon problems with time-dependent value functions is possible.
\end{remark}

The central challenge is the extreme difficulty of finding value functions on high-dimensional state spaces.
Discretized systems in fluid dynamics that we would like to control routinely have state vectors with $10^6$ dimensions or more.
The value functions on such high-dimensional state spaces become extremely difficult to approximate, compute, and store owing to the well-known ``curse of dimensionality'' and associated challenges with solving the HJB equation in high dimensions.

\subsection {Pontryagin Minimum Principle} The Pontryagin Minimum Principle \cite{Pontryagin1962} derived from the calculus of variation provides first-order necessary optimality conditions for the optimal control problem \eqref{eq:Lagrange_cost_functional}. Following the standard convention, we define a Hamiltonian $\mathcal{H} : \mathbb{R}^n  \times \mathbb{R}^m \times \mathbb{R}^n \to \mathbb{R}$.
\begin{equation}
    \mathcal{H}(x(t), u(t), \lambda(t)) := \mathcal{L}(x(t), u(t)) + \lambda(t)^{\top} f(x(t), u(t)), \quad t \in [0,\infty)
    \label{eq:Hamiltonian}
\end{equation}
where $\lambda(t) \in \mathbb{R}^n$ is a Lagrange multiplier (costate) variable enforcing the system dynamics. If $(u^*(t), x^*(t))$ is an optimal trajectory then there exists a costate trajectory $\lambda^*(t)$ so that the following canonic equations are satisfied:
\begin{align}
\dot{x}^*(t) &= \mathcal{H}_{\lambda}(x^*(t), u^*(t), \lambda^*(t)),
\quad x^*(0) = x_{0},
\label{eq:forward_dynamics_state_equation}
\\[0.5em]
\dot{\lambda}^*(t) &= -\mathcal{H}_{x}(x^*(t), u^*(t), \lambda^*(t)),
\quad \lim_{t \to \infty} \lambda^*(t) = 0,
\label{eq:backward_dynamics_adjoint_equation}
\end{align}
with the stationarity condition 
\begin{align}
   \mathcal{H}_{u}\bigl(x^{*}(t), u^{*}(t), \lambda^{*}(t)\bigr) = 0, \quad \forall t \in [0,\infty),
\\[0.5em]
   \mathcal{H}\bigl(x^{*}(t), u^{*}(t), \lambda^{*}(t)\bigr) \leq  \mathcal{H}\bigl(x^{*}(t), u(t), \lambda^{*}(t) \bigr), \quad \forall t \in [0,\infty).
\end{align}
Solving these necessary conditions for $(u^*(t), x^*(t), \lambda^*(t))$ is a two-point boundary value problem where $x^*(0)$ in \eqref{eq:forward_dynamics_state_equation} is the initial condition for the state ODE at optimality and $\lim_{t \to \infty} \lambda^*(t) = 0$ satisfies the transversality condition for the adjoint ODE computed backwards in time. The transversality condition follows from the absence of a terminal cost and the free terminal state at infinity \cite{CannarsaFrankowska2017}. 
The key fact for BASOC is that at points where the value function is differentiable, the costate (adjoint) variable and the gradient of the optimal value function are related by 
\begin{equation}
    \lambda^*(t) = \nabla V(x^*(t)).
    \label{eq:adjoint_value_function_equality}
\end{equation}
Under standard regularity assumptions, the value function is locally Lipschitz continuous, and Rademacher's theorem ensures that it is differentiable almost everywhere \cite{Bardi1997,vinter2010optimal}. At points where the value function is not differentiable, the relationship between the costate and the value function is understood in terms of generalized gradients \cite{CannarsaFrankowska2017,bensoussan2023valuegradient,ClarkeVinter1987}. The relation between the adjoint variable and the gradient of the value function is the link between the HJB equation and the Pontryagin minimum principle. This connection will be exploited in computing the balanced active subspace.

\section{Balanced Active Subspaces for Optimal Value Functions}
\label{sec:Algorithm_for_Projection}
We compute balanced active variables and projections for the value function \eqref{eq:Value_function_definition} along optimal trajectories with initial conditions drawn from a probability measure $\mu$. For example this could be the Sinai–Ruelle–Bowen (SRB) \cite{Young2002SRB} measure on an attractor representing uncontrolled states such as separated or turbulent flow, from which the objective is to optimally drive the system towards attached or laminar flow. To do this, we sample initial conditions from the distribution $\mu$ and compute optimal trajectories from these using a standard forward-backward sweep method (FBSM) detailed in Algorithm~\ref{alg:Trajectory_optimization}. The optimal state and costate samples from this ensemble are used within Algorithm~\ref{alg:BASOC_Projection_Algorithm} to compute biorthogonal direct and adjoint modes defining the BASOC projection and reduced variables. In practice, trajectory optimization may not yield a globally optimal solution. In this case the locally optimal state and costate trajectories used by BASOC correspond to state-constrained optimal value functions.

Specifically, we seek a rank-$r$ linear projection $P=\Phi_r\Psi_r^T \in \R^{n\times n}$ defined by direct modes $\Phi_r \in \R^{n\times r}$ and adjoint modes $\Psi_r^T$ satisfying the biorthogonality condition $\Psi_r^T \Phi_r = I_r$. This projection seeks to jointly preserve directions in state space along which the value function $V(x)$ is most sensitive and states exhibit large variance. 
We quantify the distribution of states $x(t)^*$ along optimal trajectories of \eqref{eq:Continous_time_dynamics} with the state covariance matrix,
\begin{equation}
W_x = \E_{x_0\sim \mu} \!\left[ \int_{0}^{\infty} x^{*}(t)\,x^{*}(t)^{T}\,dt \right].
\label{eq:State_covariance_matrix}
\end{equation}
The sensitivity of the optimal value function is quantified by its gradient covariance matrix along optimal trajectories, which by \eqref{eq:adjoint_value_function_equality} is the optimal costate covariance,
\begin{equation}
W_{g} = \E_{x_0\sim \mu} \!\left[\int_{0}^{\infty} \nabla_{x}V(x^*(t))\,\nabla_{x}V(x^*(t))^{T}\,dt \right]
= \E_{x_0\sim \mu} \!\left[\int_{0}^{\infty} \lambda^{*}(t)\,\lambda^{*}(t)^{T}\,dt \right].
\label{eq:Gradient_covariance_matrix}
\end{equation}
Under an invertible linear change of coordinates on state space $x(t) = T \tilde{x}(t)$ these matrices transform into $\tilde{W}_x = T^{-1} W_x T^{-T}$ and $\tilde{W}_g = T^T W_g T$.
By a well-known result stated in \cite{Zhou1996robust} Theorem~3.22, there is a balancing transformation $T$ where these covariances are simultaneously diagonalized (balanced) as
\begin{equation}
    \tilde{W}_x = \diag(\Sigma, \Sigma_1, 0, 0), \qquad \tilde{W}_g = \diag(\Sigma, 0, \Sigma_2, 0),
\end{equation}
where $\Sigma, \Sigma_1, \Sigma_2$ are diagonal positive-definite matrices with the balanced singular values in $\Sigma = \diag(\sigma_1, \ldots, \sigma_{n_b})$ arranged in non-increasing order.
The balancing projection $P = T \diag(I_r, 0) T^{-1}$
corresponds to transforming into the balanced coordinate system, truncating to retain the leading $r \leq \rank \Sigma$ balanced coordinates, and then mapping back to the original coordinate system.
Square-root balancing \cite{tombs1987truncated, antoulas2005approximation} computes the direct and adjoint modes of this projection and the balanced singular values from factorized covariance matrices $W_x = X X^T$ and $W_g = Y Y^T$ via slim singular value decomposition (SVD) and rank-$r$ truncation:
\begin{equation}\label{eq:sqrt_balancing}
    Y^T X = U \Sigma V^T, \qquad \Phi_r = X V_r \Sigma_r^{-1/2}, \qquad \Psi_r = Y U_r \Sigma_r^{-1/2}.
\end{equation}

\begin{algorithm}[H]
\caption{Balanced Active Subspace (BASOC) Projection Algorithm}
\label{alg:BASOC_Projection_Algorithm}
\textbf{Input:} $\mu$ (sampling measure), $N_{\mathrm{IC}}$ (number of samples), $r$ (reduced dimension)\\[0.3em]
\textbf{Output:} $\Phi_r$ (primal modes), $\Psi_r$ (adjoint modes), $\Sigma_r$ (dominant singular values)
\begin{algorithmic}[1]
\STATE Draw $x_0^{(i)} \sim \mu$ for $i=1,\ldots,N_{\mathrm{IC}}$.
\FOR{$i = 1,\ldots,N_{\mathrm{IC}}$}
    \STATE Solve the optimal control problem from $x_0^{(i)}$ using Algorithm~\ref{alg:Trajectory_optimization}.
    \STATE Obtain the optimal state and costate snapshot matrices $\widetilde{X}^{(i)}$ and $\widetilde{Y}^{(i)}$ as defined in \eqref{eq:Single_Gradient_Trajectory} and \eqref{eq:Single_State_Trajectory}.
\ENDFOR
\STATE Form the aggregate snapshot matrices
\vspace{-0.8em}
\[
X=\frac{1}{\sqrt{N_{\mathrm{IC}}}}[\widetilde{X}^{(1)} \cdots \widetilde{X}^{(N_{\mathrm{IC}})}] \qquad
Y=\frac{1}{\sqrt{N_{\mathrm{IC}}}}[\widetilde{Y}^{(1)} \cdots \widetilde{Y}^{(N_{\mathrm{IC}})}].
\]
\vspace{-1em}
\STATE Compute a slim SVD of the state--costate inner-product matrix
\[
    Y^{T}X = U \Sigma V^{T}
\]
\vspace{-1.8em}
\STATE Construct the biorthogonal primal and adjoint modes
\[
\Phi = X V \Sigma^{-1/2}, \qquad
\Psi = Y U \Sigma^{-1/2}.
\]

\STATE Retain the first $r$ BASOC modes and singular values:
\vspace{-0.em}
\[
\Phi_r = \Phi(:,1:r),\qquad
\Psi_r = \Psi(:,1:r),\qquad
\Sigma_r = \Sigma(1:r,1:r).
\]

\STATE Return $\Phi_r$, $\Psi_r$, and $\Sigma_r$.
\end{algorithmic}
\end{algorithm}

\begin{remark}
If the aggregate snapshot matrices $X, Y \in \mathbb{R}^{n \times M} $ are wide ($n \ll M $), where $n$ is the state dimension and $M$ is the total number of snapshots, an optional pre-compression can be performed prior to step 7 in Algorithm~\ref{alg:BASOC_Projection_Algorithm}. Specifically, compute the slim SVDs 
\begin{equation}
    X = U_X \Sigma_X V_X^{T},
\qquad
Y = U_Y \Sigma_Y V_Y^{T},
\end{equation}
and replace 
\begin{equation}
    X \leftarrow U_X \Sigma_X, \qquad Y \leftarrow U_Y \Sigma_Y.
\end{equation}
Since the right singular factors $V_X$ and $V_Y$ do not affect the column space of $X$ and $Y$, they may be removed. Additionally, a truncated rank-$\ell$ approximation may be employed: 
\begin{equation}
    X \approx U_{X,\ell}\Sigma_{X,\ell},
    \qquad
    Y \approx U_{Y,\ell}\Sigma_{Y,\ell},
\end{equation}
where $\ell \ll n$, while retaining the dominant variance and sensitivity directions. 
\end{remark}
% We refer to Zahm et al \cite{Zahm2020} for the generalization of selecting this projection to multivariate vector-valued functions.

Given an arbitrary linear projection $P$, the optimal approximation of the value function $V(x)$ in the mean-square sense is given by its conditional expectation,
\begin{equation}
    \tilde{V}(Px) = \E \bigl [V(x)\mid Px \bigr].
    \label{eq:Value_func_approximation}
\end{equation} 
This averages $V(x)$ over the components of $x$ in the null space of the projection $P$. 
Zahm et al. \cite{Zahm2020} show that when state distribution is Gaussian, the resulting error is bounded by
\begin{equation}
\E\left[ \left\| V(x) - \tilde{V}(Px) \right\|^2 \right] 
\leq \Tr\left[ W_g (I - P)\, W_x (I - P)^{T} \right].
\label{eq:Value_func_error_bound}
\end{equation}
Otto et al. \cite{Otto2023model} show that the balancing projection minimizes this upper bound.
The truncated balanced active variables
\begin{equation}\label{eq:State_Projection}
    z = \Psi_r^T x
\end{equation}
are thus a principled choice for constructing reduced-order approximations of the value function $V(x) \approx \hat{V}(\Psi_r^T x) = \hat{V}(z)$.
Nonlinear kernel-based generalizations of these coordinates can also be found \cite{Otto2023model, romor2022kernel}, though we focus on the linear case here.

BASOC computes the balancing projection and balanced active variables for empirical covariance matrices of state and costate snapshots along the sampled ensemble of locally optimal trajectories.
The square-root balancing method is used in Algorithm~\ref{alg:BASOC_Projection_Algorithm} with factors $X$ and $Y$ obtained from Monte Carlo sampling of initial conditions and temporal quadrature.
With $w_j$ being quadrature weights at quadrature points $t_j$ approximating integration over $[0, \infty)$, the blocks of $X$ and $Y$ corresponding to the $i$th trajectory are formed by 
\begin{equation}
    \tilde{Y}^{(i)} =\begin{bmatrix}| & | & | \\\sqrt{w_1}\,\lambda^{*(i)}(t_1) &\sqrt{w_2}\,\lambda^{*(i)}(t_2) & \cdots \;\sqrt{w_{N_q}}\,\lambda^{*(i)}(t_{N_q}) \\
    | & | & |\end{bmatrix}
    \in \mathbb{R}^{n \times N_q},
    \label{eq:Single_Gradient_Trajectory}
\end{equation}
\begin{equation}
    \tilde{X}^{(i)} = \begin{bmatrix} | & | & | \\
    \sqrt{w_1}\,x^{*(i)}(t_1) & \sqrt{w_2}\,x^{*(i)}(t_2) & \cdots \;\sqrt{w_{N_q}}\,x^{*(i)}(t_{N_q}) \\
    | & | & | \end{bmatrix}
    \in \mathbb{R}^{n \times N_q},
    \label{eq:Single_State_Trajectory}
\end{equation}
where $N_q$ is the number of temporal quadrature nodes and $N_{\mathrm{IC}}$ is the number of sampled initial conditions. 
Specifically, we use the same linearly-stretched left Gauss-Radau quadrature rule that we use for FBSM over a suitably long time horizon, as suggested in \cite{ross2012pseudospectral}.
These yield factors of empirical covariance matrices because
\begin{equation}
W_x \approx \frac{1}{N_{\mathrm{IC}}} \sum_{i=1}^{N_{\mathrm{IC}}} \sum_{j=1}^{N_q} w_j\, x^{*(i)}(t_j) \left(x^{*(i)}(t_j)\right)^T = XX^T,
    \label{eq:emperical_state_covariance}
\end{equation}
\begin{equation}
W_g
\approx \frac{1}{N_{\mathrm{IC}}} \sum_{i=1}^{N_{\mathrm{IC}}} \sum_{j=1}^{N_q} w_j\, \lambda^{*(i)}(t_j) \left(\lambda^{*(i)}(t_j)\right)^T = YY^T.
    \label{eq:emperical_gradient_covariance}
\end{equation}

\section{Exploiting BASOC Projections and Variables}
\subsection{Petrov-Galerkin modeling}
The BASOC projections $P = \Phi_r \Psi_r^T$ in Section~\ref{sec:Algorithm_for_Projection} can be used to construct $r$-dimensional ROMs of the full-order model (FOM) \eqref{eq:Continous_time_dynamics}. One of the simplest ways to do this is by Petrov--Galerkin projection of full-order dynamics. The resulting ROM is an $r$-dimensional dynamical system given by
\begin{equation}
    \left\{
    \begin{aligned}
    \dot{z} &= \Psi_r^T f(\Phi_r z,\; u),
    \qquad z(0) = \Psi_r^T x_0, \\
    \hat{y} &= h(\Phi_r z,\; u).
    \end{aligned}
    \right.
    \label{eq:Reduced_order_model}
\end{equation}
One could also construct nonlinear Galerkin ROMs \cite{marion1989nonlinear, buchfink2024model, otto2023learning} based on nonlinear BASOC projections of the form 
\begin{equation}
    P_{\text{nonlin.}}(x) = \Phi_r \Psi_r^T x + (I - \Phi_r \Psi_r^T) \phi(\Psi_r^T x),
\end{equation}
where $\phi$ parameterizes a nonlinear trial manifold as a graph in balanced coordinates and can be fit to trajectory data by $W_g$-weighted least-squares or losses as in \cite{otto2023learning}. The BASOC-Petrov--Galerkin method is intrusive as it requires directly accessing the FOM governing equations \eqref{eq:Continous_time_dynamics}. However, the BASOC projection itself can be computed using less intrusive automatic differentiation methods for forward-backward sweeping. Thus, operator inference \cite{peherstorfer2016data, aretz2026nested, Geelen2023} using the BASOC projection or a nonlinear extension becomes an attractive option for non-intrusive model reduction. We leave this and the nonlinear extension for future work.

To provide some theoretical motivation for BASOC-Petrov--Galerkin ROMs, we consider the simplified case of a linear FOM and quadratic cost, that is, the LQR problem. In this setting, \cref{thm:BASOC_for_LQR} shows that BASOC is related to initial condition balancing \cite{Beattie2017} of the optimal closed-loop system.
Specifically, let the FOM be
    \begin{equation}
    \dot{x}(t) = Ax(t) + Bu(t),
    \qquad
    x(0)=x_0,
    \label{eq:Linear_dynamical_system}
\end{equation}
where $A \in \mathbb{R}^{n\times n}$, $B \in \mathbb{R}^{n\times m}$, together with the LQR cost functional,
\begin{equation}
J(u(\cdot)) := \int_0^\infty \left( x(t)^T Q x(t) + u(t)^T R u(t) \right) \, dt,
\label{eq:Quadratic_cost_function}
\end{equation}
where $Q \succeq 0$ defines the state penalty and ${R} \succ 0$ defines the control penalty. 
When $(A,B)$ is stabilizable and $(Q^{1/2},A)$ is detectable, the optimal control law and value function \cite{Zhou1996robust, Liberzon2012} are given by
\begin{equation}\label{eq:linear_optimal_control_and_value_func}
    u^*(t) = -K\,x(t), \qquad V(x)=x^{T} \Pi x,
\end{equation}
where $K = R^{-1} B^T \Pi$ is the optimal state-feedback gain matrix and $\Pi \succeq 0$ is the unique stabilizing solution of the algebraic Riccati equation
\begin{equation}
    A^T\Pi + \Pi A - \Pi B R^{-1} B^T \Pi + Q = 0.
    \label{eq:riccati_equation}
\end{equation}
We use the following well-known lemma. Its proof is hard to locate, so we provide it.
\begin{lemma}[uniqueness of square-root balancing]\label{lem:sqrt_balancing_uniqueness}
    Let $W_c, W_o \in \R^{n\times n}$ be positive semidefinite matrices, let $W_c = X X^T$ and $W_o = Y Y^T$ be any factorizations of them, and let $Y^T X = U \Sigma V^T$ be a slim SVD with nonzero singular values $\sigma_1 \geq \cdots \sigma_\rho > 0$. Then the eigenvalues of $W_c W_o$ are $\lambda_j = \sigma_j^2$ for $j \leq \rho$ and $\lambda_j = 0$ for $j > \rho$, meaning $\Sigma$ is uniquely determined by $W_c W_o$. Suppose that $\sigma_r > \sigma_{r+1}$ and let $\Phi_r = X V_r \Sigma_r^{-1/2}$ and $\Psi_r = Y U_r \Sigma_r^{-1/2}$ be defined by rank-$r$ truncation of the slim SVD. Then the rank-$r$ square-root balancing projection $P_r = \Phi_r \Psi_r^T$ is the unique rank-$r$ spectral projector of $W_c W_o$ associated with its largest $r$ eigenvalues.
\end{lemma}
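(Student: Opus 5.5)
The plan is to reduce everything to the small matrix $Y^T X$ by moving factors around in products, and then to show that $P_r$ has exactly the range and null space of the dominant spectral projector of $W_c W_o$.

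\emph{Step 1: eigenvalues.} Write $W_c W_o = X X^T Y Y^T = X (Y^T X)^T Y^T$. Nonzero eigenvalues (with algebraic multiplicity) are invariant under cyclic permutation of a product $AB \mapsto BA$. Hence $W_c W_o$ has the same nonzero eigenvalues as $(Y^T X)^T (Y^T X) = V \Sigma^2 V^T$, namely $\sigma_1^2,\dots,\sigma_\rho^2$, and all remaining eigenvalues are zero. This shows $\Sigma$ depends only on $W_c W_o$ and not on the chosen factorizations.

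\emph{Step 2: $P_r$ is a projector onto an invariant subspace.}
\begin{itemize}
\item Biorthogonality follows from $\Psi_r^T \Phi_r = \Sigma_r^{-1/2} U_r^T (Y^T X) V_r \Sigma_r^{-1/2} = I_r$. Therefore $P_r^2 = P_r$ and $\rank P_r = r$.
\item For the direct modes, $W_c W_o \Phi_r = X X^T Y Y^T X V_r \Sigma_r^{-1/2}$. Using $X^T Y = V \Sigma U^T$ and $Y^T X V_r = U_r \Sigma_r$, this equals $X V \Sigma U^T U_r \Sigma_r^{1/2} = X V_r \Sigma_r^{3/2} = \Phi_r \Sigma_r^2$. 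So $\Range \Phi_r$ is invariant under $W_c W_o$, and the restriction has eigenvalues $\sigma_1^2,\dots,\sigma_r^2$.
\item Symmetrically, $W_o W_c \Psi_r = \Psi_r \Sigma_r^2$, i.e.\ $\Psi_r^T W_c W_o = \Sigma_r^2 \Psi_r^T$. Hence $\ker \Psi_r^T = \ker P_r$ is also invariant under $W_c W_o$.
\end{itemize}

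\emph{Step 3: identify the spectral decomposition.} The space splits as $\R^n = \Range P_r \oplus \ker P_r$, and both pieces are $W_c W_o$-invariant. The spectrum on $\Range P_r$ is $\{\sigma_1^2,\dots,\sigma_r^2\}$. Since the characteristic polynomial factors over this invariant splitting, the spectrum on $\ker P_r$ is the remaining multiset $\{\sigma_{r+1}^2,\dots,\sigma_\rho^2,0,\dots\}$. The gap $\sigma_r > \sigma_{r+1}$ makes these two multisets disjoint. An idempotent whose range and kernel are complementary invariant subspaces carrying disjoint parts of the spectrum is the Riesz spectral projector $\frac{1}{2\pi i}\oint_\Gamma (zI - W_c W_o)^{-1}\,dz$, with $\Gamma$ enclosing $\sigma_1^2,\dots,\sigma_r^2$ only. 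That projector depends only on $W_c W_o$, which gives uniqueness.

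\emph{Main obstacle.} The delicate point is Step 3 when $W_c W_o$ is not diagonalizable, since Jordan blocks at the eigenvalue $0$ can occur. To handle it, I would argue via generalized eigenspaces rather than eigenvectors. Restricted to each invariant summand, the operator's characteristic polynomial has roots only in the designated spectral set. Therefore $\Range P_r$ lies in the sum of generalized eigenspaces for $\{\sigma_j^2\}_{j\le r}$, and $\ker P_r$ lies in the complementary sum. Dimension counting then forces both inclusions to be equalities.
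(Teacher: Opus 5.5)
Your proof is correct, and its core matches the paper's: biorthogonality $\Psi_r^T\Phi_r = I_r$, the relations $W_cW_o\Phi_r = \Phi_r\Sigma_r^2$ and $\Psi_r^T W_cW_o = \Sigma_r^2\Psi_r^T$, and the fact that an idempotent is determined by its range and kernel. The differences are at the two ends.

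\begin{itemize}
\item \emph{Spectrum.} The paper gets the eigenvalues from one identity with the full (untruncated) modes: $W_cW_o = XV\Sigma U^TY^T = \Phi\Sigma^2\Psi^T$ with $\Psi^T\Phi = I_\rho$. You instead use the $AB$/$BA$ swap. Your route handles the algebraic multiplicities, including the $n-\rho$ zero eigenvalues, more directly than the paper's one-line inference.
\item \emph{Uniqueness.} You go through the characteristic-polynomial splitting, generalized eigenspaces and a dimension count. The paper simply states that the gap makes the dominant right and left eigenspaces unique.
\end{itemize}

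Your stated main obstacle does not actually arise. The paper's identity gives $\R^n = \Range\Phi\oplus\ker\Psi^T$. On the first summand $W_cW_o$ acts as $\Sigma^2$ in the basis $\Phi$, and on the second it acts as $0$. So $W_cW_o$ is always diagonalizable, and Jordan blocks at $0$ cannot occur. Your claim that they can is false.

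This does not break your argument: the generalized-eigenspace version is valid without diagonalizability, just more general than needed. The paper's factorization is shorter and gives diagonalizability for free. Your route makes the multiplicity and uniqueness bookkeeping fully explicit.
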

\begin{proof}
    Letting $\Phi = X V \Sigma^{-1/2}$ and $\Psi = Y U \Sigma^{-1/2}$, we observe that
    \begin{equation}
        \Psi^T \Phi = \Sigma^{-1/2} U^T Y^T X V \Sigma^{-1/2} = I.
    \end{equation}
    Moreover, we observe that
    \begin{equation}
        (W_c W_o) 
        = X X^T Y Y^T 
        = X V \Sigma U^T Y^T
        = \Phi \Sigma^2 \Psi^T.
    \end{equation}
    Using these two relations, we obtain
    \begin{equation}
        (W_c W_o) \Phi = \Phi \Sigma^2 \qquad \mbox{and} \qquad (W_c W_o)^T \Psi = \Psi \Sigma^2,
    \end{equation}
    meaning that $(W_c W_o)$ has eigenvalues $\lambda_1 = \sigma_1^2 , \ldots, \lambda_\rho = \sigma_\rho^2$ with $\lambda_j = 0$ for all $j > \rho$.
    Because $\lambda_r > \lambda_{r+1}$, the right eigenspace $E_R = \Range(\Phi_r)$ of $W_c W_o$ is unique and so is the left eigenspace $E_L = \Range(\Psi_r)$.
    Because a projection is uniquely determined by its range and nullspace (see Section~5.9 \cite{Meyer2000matrix}) it follows that $P_r = \Phi_r \Psi_r^T$ is uniquely determined by $W_c W_o$ and corresponds to its leading rank-$r$ spectral projector.
\end{proof}

\begin{theorem}\label{thm:BASOC_for_LQR}
    Suppose that $(A,B)$ is stabilizable, $(Q^{1/2},A)$ is detectable, and let $K$ and $V$ be as in \eqref{eq:linear_optimal_control_and_value_func}.
    Let the initial conditions $x_0 \sim \mu$ have finite $W_{IC} = \E x_0 x_0^T$ and let $P_{\text{BASOC}}$ be a rank-$r$ BASOC projection defined by the state and costate covariances \eqref{eq:State_covariance_matrix} and \eqref{eq:Gradient_covariance_matrix}.
    Let $P_{\text{BT}}$ be a rank-$r$ balancing projection for the realization
    \begin{equation}
        \mathcal{G}_{\mathrm{cl}} =
        \left[
        \begin{array}{c|c} 
            A-BK & W_{IC}^{1/2}\vphantom{\Big|} \\ 
            \hline 
            Q^{1/2} & 0\vphantom{\Big|} \\  
            R^{1/2} K & 0
        \end{array}
        \right].
    \label{eq:Linear_system_realization}
    \end{equation}
    The Hankel singular values $\tilde{\Sigma}$ of $\mathcal{G}_{\mathrm{cl}}$ and the BASOC singular values $\Sigma$ are related by
    \begin{equation}
        \Sigma = 2 \tilde{\Sigma}^2.
    \end{equation}
    If these have a gap $\sigma_r > \sigma_{r+1}$, then the two projections are the same

    \begin{equation}\label{eq:Projection_equivalence}
    P_{BASOC} = P_{BT}.
    \end{equation} 
\end{theorem}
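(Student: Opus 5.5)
We will compute both covariance matrices for the LQR problem in closed form and compare them with the Gramians of $\mathcal{G}_{\mathrm{cl}}$. Then we apply \cref{lem:sqrt_balancing_uniqueness} to both balancing problems.

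\emph{Closed-loop trajectories.} Let $A_{\mathrm{cl}} = A - BK$. Under the stabilizability and detectability assumptions, $A_{\mathrm{cl}}$ is Hurwitz. The optimal trajectory is $x^*(t) = e^{A_{\mathrm{cl}}t}x_0$. Since $V(x) = x^T\Pi x$ is $C^1$, relation \eqref{eq:adjoint_value_function_equality} gives the costate
\[
\lambda^*(t) = \nabla V(x^*(t)) = 2\Pi e^{A_{\mathrm{cl}}t}x_0.
\]
This also satisfies $\lambda^*(t)\to 0$. We substitute into \eqref{eq:State_covariance_matrix} and \eqref{eq:Gradient_covariance_matrix}. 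Exchanging expectation and integral (justified by Tonelli/Fubini, since $W_{IC}$ is finite and the exponential decays) gives
\[
W_x = \int_0^\infty e^{A_{\mathrm{cl}}t}W_{IC}e^{A_{\mathrm{cl}}^Tt}\,dt = W_c,
\qquad
W_g = 4\,\Pi W_c \Pi .
\]
Here $W_c$ is exactly the controllability Gramian of $\mathcal{G}_{\mathrm{cl}}$, because the input matrix satisfies $W_{IC}^{1/2}(W_{IC}^{1/2})^T = W_{IC}$.

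\emph{The key identity.} Next we need the observability Gramian $W_o$ of $\mathcal{G}_{\mathrm{cl}}$. It solves
\[
A_{\mathrm{cl}}^TW_o + W_oA_{\mathrm{cl}} + Q + K^TRK = 0.
\]
We rewrite the Riccati equation \eqref{eq:riccati_equation} in closed-loop form:
\[
A_{\mathrm{cl}}^T\Pi + \Pi A_{\mathrm{cl}} + \Pi BR^{-1}B^T\Pi + Q = 0,
\]
and note that $\Pi BR^{-1}B^T\Pi = K^TRK$. Hence $\Pi$ solves the same Lyapunov equation. Since $A_{\mathrm{cl}}$ is Hurwitz, that solution is unique, so $W_o = \Pi$. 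This step is the crux: it identifies the value-function matrix with the closed-loop observability Gramian. We must check carefully that the output matrix $\begin{bmatrix}Q^{1/2}\\ R^{1/2}K\end{bmatrix}$ gives $C^TC = Q + K^TRK$.

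\emph{Matching spectra and projections.} We now have
\[
W_xW_g = 4\,W_c\Pi W_c\Pi = 4\,(W_cW_o)^2 .
\]
The Hankel singular values satisfy $\tilde\sigma_j^2 = \lambda_j(W_cW_o)$. By the lemma, the BASOC singular values satisfy
\[
\sigma_j^2 = \lambda_j(W_xW_g) = 4\tilde\sigma_j^4,
\]
so $\sigma_j = 2\tilde\sigma_j^2$. This ordering is consistent because the map $s \mapsto 2s^2$ is increasing on $s \ge 0$. For the same reason, a gap $\sigma_r > \sigma_{r+1}$ is equivalent to $\tilde\sigma_r > \tilde\sigma_{r+1}$. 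The lemma then applies to both balancing problems.

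\emph{Equality of the projections.} By the lemma, $P_{\mathrm{BASOC}}$ is the leading rank-$r$ spectral projector of $W_xW_g = 4M^2$, where $M = W_cW_o$. Likewise, $P_{\mathrm{BT}}$ is the leading rank-$r$ spectral projector of $M$. From the lemma's proof, $M = \Phi\tilde\Sigma^2\Psi^T$ with $\Psi^T\Phi = I$, so
\[
4M^2 = \Phi\,(4\tilde\Sigma^4)\,\Psi^T .
\]
Thus $4M^2$ has the same right and left eigenvectors as $M$, and its eigenvalue ordering is preserved. The two spectral projectors therefore share the same range and nullspace, which gives \eqref{eq:Projection_equivalence}.
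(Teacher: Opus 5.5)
Your proof is correct and follows the paper's argument essentially step for step. You show $W_x = W_c$ by Fubini, and $W_o = \Pi$ from the closed-loop Lyapunov form of the Riccati equation together with uniqueness for Hurwitz $A-BK$. This gives $W_x W_g = 4(W_cW_o)^2$, and you conclude with the square-root balancing lemma. Your explicit check that $W_cW_o = \Phi\tilde{\Sigma}^2\Psi^T$ and $4(W_cW_o)^2 = \Phi(4\tilde{\Sigma}^4)\Psi^T$ have the same leading rank-$r$ spectral projector fills in a final step the paper only states as ``the claimed results follow from'' the lemma.
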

\begin{proof}
The algebraic Riccati equation
\eqref{eq:riccati_equation} can be rewritten as the Lyapunov equation
\begin{equation}
    (A-BK)^T\Pi+\Pi(A-BK)+Q+K^TRK=0.
    \label{eq:Lyapunov_riccati_equation_closed_loop}
\end{equation}
The observability Gramian $W_{o}$ for the realization in \eqref{eq:Linear_system_realization} satisfies the same Lyapunov equation
\begin{equation}
(A-BK)^{T}W_o + W_o(A-BK) + Q +K^{T}RK =0.
\label{eq:Observability_gramian_Lyapunov}
\end{equation}
Because $A - B K$ is Hurwitz, the solution is unique and we must have
\begin{equation}
    \Pi = W_{o}.
    \label{eq:Riccati_equal_observability_gramian}
\end{equation}
With initial condition $x_0$, the optimal state trajectory solving the LQR problem is
\begin{equation}
    x^*(t)=e^{(A-BK)t}x_0.
    \label{eq:Closed_loop_system}
\end{equation}
Hence, the state covariance matrix along optimal trajectories in \eqref{eq:State_covariance_matrix} is
\begin{equation}
    W_x = \E_{x_0} \left[ \int_0^\infty x^*(t)x^*(t)^T \, dt \right]
    =
    \E_{x_0} \left[ \int_0^\infty e^{(A-BK)t} x_0 x_0^T e^{(A-BK)^T t} \, dt\right].
    \label{eq:Wx_optimal_trajectories_linear}
\end{equation}
By Fubini's theorem and linearity of expectation, the state covariance matrix is equal to the controllability Gramian $W_c$ of the realization in \eqref{eq:Linear_system_realization},
\begin{equation}
    W_x = \int_0^\infty e^{(A-BK)t} \, W_{IC} \, e^{(A-BK)^T t}\, dt = W_c.
    \label{eq:Wx_linearity_of_expectation}
\end{equation}
Because the costates along optimal trajectories are $\lambda^*(t) = \grad V(x^*(t)) = 2\Pi x^*(t)$, 
the costate covariance matrix in \eqref{eq:Gradient_covariance_matrix} is  
\begin{equation}
    W_g = \E_{x_0} \left[ \int_0^\infty \lambda^*(t)\lambda^*(t)^T \, dt \right]
    = \E_{x_0} \left[ \int_0^\infty 4\Pi x^*(t)x^*(t)^T \Pi \, dt \right] = 4W_o W_x W_o.
    \label{eq:Gradient_covariance_linear_system}
\end{equation}
Therefore, we have
\begin{equation}
    W_x W_g = 4 (W_c W_o)^2,
\end{equation}
and the claimed results follow from \cref{lem:sqrt_balancing_uniqueness}.
\end{proof}

The oblique BASOC projection modes obtained in the linear case can be interpreted as an initial-condition balancing procedure, similar to \cite{Beattie2017}. The resulting ROM approximates the system output and optimal control signals using weights derived from the cost functional along optimal trajectories.

\subsection{Surrogates for suboptimal control}
\label{subsec:Surrogate_control}
Response surfaces and surrogate models can be constructed for the value function and feedback law in the reduced variables $z = \Psi_r^T x \in \mathbb{R}^r$ rather than in the full state $x \in \mathbb{R}^n$. If these quantities admit accurate approximations, then this can dramatically reduce the parametric complexity of the resulting surrogate models, the computational cost to evaluate them, and the sample complexity to fit them.
By their construction, we expect BASOC variables to be well suited to constructing surrogate models of the optimal value function.
This can be done either by fitting a response surface to the data along optimal trajectories $\left\{ \left( \Psi_r^T x^*_i,\; V(x^*_i) \right) \right\}_{i=1}^{d}$, where $d$ denotes the total number of training samples, or by solving the HJB equation using a reduced-order model of the dynamics in the reduced BASOC coordinates. We adopt the former here.

The control law can also be approximated in reduced BASOC coordinates. For control-linear systems of the form \(\dot{x}=f(x)+Bu\) with a running cost \(\mathcal{L}(x,u)=\ell(x)+u^TRu\), where \(R\succ0\), a differentiable value function, and no control constraints, the optimal feedback law obtained from the HJB equation is \(u^*(x)=-\frac12R^{-1}B^T\nabla V(x)\). Consequently, if the value function admits an accurate approximation \(V(x)\approx\tilde V(\Psi_r^Tx)\) with sufficiently accurate gradients, the optimal feedback law can also be approximated using only the reduced coordinates \(z=\Psi_r^Tx\), since \(\nabla V(x)\approx\Psi_r\nabla_z\tilde V(z)\).
One could potentially use second derivatives of the value function obtained by solving differential Riccati equations along optimal trajectories to find balanced active variables directly for the optimal feedback law, though we leave this for future work.

A surrogate control law can be obtained directly by fitting the control input as a function of the reduced variables $z=\Psi_r^T x$. We adopt this approach by fitting a surrogate model $u = \tilde{\kappa}(z)$ to the optimal trajectory data $\left\{ \left( \Psi_r^T x^*_i,\; u^*(x^*_i) \right) \right\}_{i=1}^{d}$. While a variety of regression techniques could be used, we employ Gaussian-process regression with Matérn kernels. Further implementation details are provided in the numerical results section where we evaluate surrogate feedback laws.
An alternative approach could be to use a value function surrogate $\tilde{V}(z)$ in BASOC coordinates and Petrov-Galerkin ROM to solve the following finite-dimensional optimization problem at every time step:
\begin{equation}
    \hat{u}^*(z) \in \arg\min_{w} \left\{ \nabla \tilde{V}(z)^{T}{f}_{r}(z,w) + \mathcal{L}(\Phi_r z,w) \right\}.
    \label{eq:surrogate_control_law}
\end{equation}
However, this can be costly to perform online so we do not adopt this approach.

\section{Numerical Examples and Evaluation}
In the following numerical examples, we consider the infinite-horizon
optimal control problem
\begin{equation}
\min_{u(\cdot)} \quad J_{x_0}(u(\cdot)) =
\int_0^\infty
\left(y(t)^Ty(t)+ 100\, u(t)^T u(t)\right)\,dt,
\label{eq:numerical_optimal_control_problem}
\end{equation}
subject to the problem-specific dynamics and output quantities $y=Cx$.
\subsection{Challenging nonlinear toy problem}
\label{subsec:toy_problem_compute_projections}
We consider the challenging nonlinear model in \cite{Otto2023model} for which standard model reduction methods perform poorly:  
\begin{equation}\label{eq:Toy_model_state_space}
\begin{aligned}
\dot{x}_1 &= -x_1 + 20x_1x_3 + u, \\
\dot{x}_2 &= -2x_2 + 20x_2x_3 + u, \\
\dot{x}_3 &= -5x_3 + u, \\
y &= x_1 + x_2 + x_3.
\end{aligned}
\end{equation}
Model reduction and value function approximation for this system are challenging because the rapidly-decaying state $x_3$ strongly influences the dynamics of $x_1$ and $x_2$ via quadratic nonlinear interactions, which lead to transient energy growth for these state variables. As a result, conventional methods such as balanced truncation perform poorly since it only considers the linear part of \eqref{eq:Toy_model_state_space}. Additionally, $x_3$ remains small and has low energy due to its rapid decay and is consequently ignored by POD, which retains the most energetic variables closely aligned with $x_1$ and $x_2$. The control goal is to minimize the cost functional in \eqref{eq:numerical_optimal_control_problem}, suppress transient growth, and drive the system to the origin faster than its natural dynamics via feedback control.

To compute the balanced active modes defining the projection $P$, we performed trajectory optimization in Algorithm~\ref{alg:Trajectory_optimization} for 50 initial conditions sampled uniformly at random from the cube $[0.5,1]^3\subset\R^3$, with $N_q=200$ quadrature points over the time interval $0 \leq t \leq 10$. The optimal state and gradient snapshots obtained via trajectory optimization were used to assemble the state and gradient covariance matrices. The projection was computed via Algorithm~\ref{alg:BASOC_Projection_Algorithm}. The rapidly decaying balanced singular values indicate that the value function can be well-approximated using two balanced active variables. 
With $r=2$, the BASOC singular value tail satisfies
\begin{equation}
\frac{\sum_{\ell=1}^{r}\sigma_{\ell}^{2}}{\sum_{\ell=1}^{n}\sigma_{\ell}^{2}} \geq 0.99.
\label{eq:cumulative_balanced_singular_value_ratio}
\end{equation}
We compare BASOC against other methods of obtaining rank-$2$ linear projections $P = \Phi \Psi^T$ and reduced variables $z = \Psi^T x$ to approximate the value function, optimal control law, and construct Petrov-Galerkin ROM. To evaluate these projections on control-relevant tasks, we test on 30 unseen initial conditions sampled uniformly at random from the same cube $[0.5,1]^3$, and 30 extrapolated initial conditions sampled uniformly at random from the new cube $[1,1.5]^3$.
\begin{figure}[t] 
    \centering
    \includegraphics[width=0.95\linewidth]{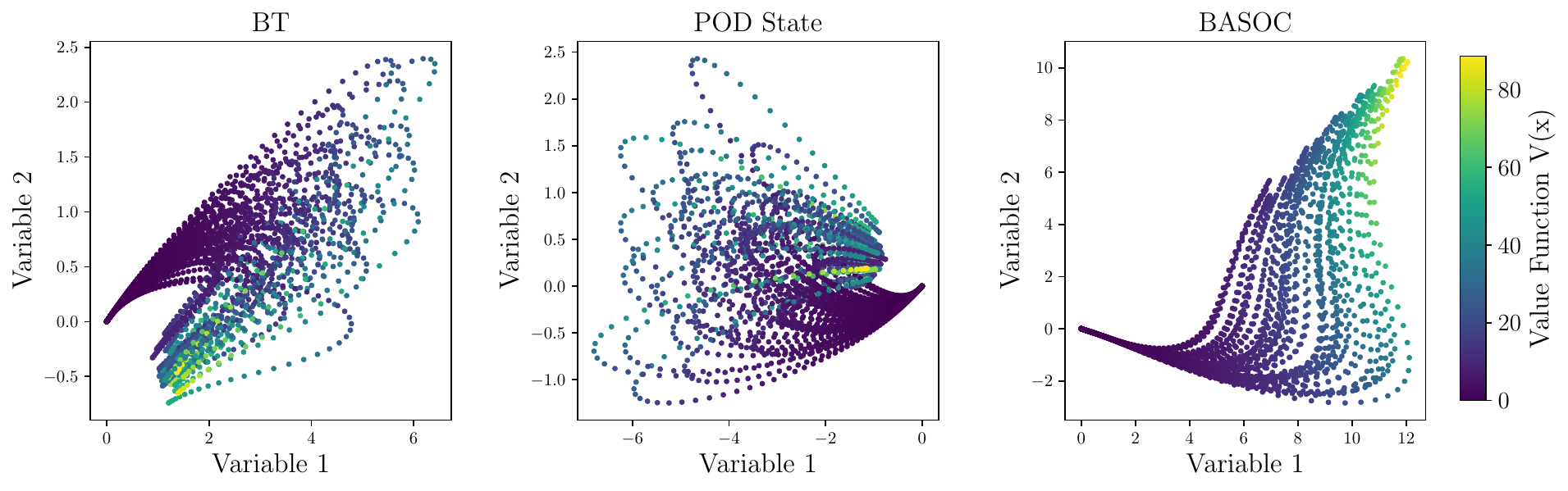}
    \caption{Comparison of the value function along optimal trajectories plotted in reduced variables selected by different methods for the challenging nonlinear toy model.}
    \label{fig:value_function_projection_toy}
\end{figure}
In Figure~\ref{fig:value_function_projection_toy}, we observe that the value function is nearly a single-valued function of the retained BASOC coordinates over the trajectory ensemble. In contrast, the value function takes distinct values at nearby points in the coordinates retained by POD and BT, meaning the value function  is poorly approximated as a function of these coordinates. The POD Adjoint coordinates --- obtained by truncated SVD of the optimal costate matrix $Y$ --- also provide an excellent approximation of the value function comparable to BASOC.

\subsection{Complex Ginzburg-Landau Equation}
\label{subsec:CGLE_compute_projections}
As a representative model for bluff-body wake flows, we consider the nonlinear complex Ginzburg-Landau (CGL) equation following \cite{Ilak2010}:
\begin{equation}
\frac{\partial q}{\partial t} = \left(\nu \frac{\partial}{\partial x}
+
\gamma \frac{\partial^2}{\partial x^2}
+ \mu \right) q - a |q|^2 q, \qquad x \in (-\infty,\infty),
\qquad q(x,t)\in\mathbb{C},
\label{eq:CGLE_Equation}
\end{equation}
with parameters $a=0.1$, $\nu = 2 + 0.4i$, $\gamma = 1 - i$, $\mu = (\mu_0 - 0.2^2) + \frac{\mu_2 x^2}{2}$, $i=\sqrt{-1}$, $\mu_0 = 0.41$, and $\mu_2 = -0.01$. For these parameters, the origin is unstable with nonnormal linearized dynamics resulting in high sensitivity in an ``upstream'' region and large response in a ``downstream'' region. Such behavior is typical of boundary-layer, shear, and convectively unstable flows \cite{Schmid2001}, making it a standard test case. In addition, the system undergoes a supercritical Hopf bifurcation with respect to the parameter $\mu_0$, giving rise to a stable limit cycle at the chosen value of $\mu_0$ \cite{Chomaz1988}. The optimal control goal is to minimize \eqref{eq:numerical_optimal_control_problem} by applying feedback forcing upstream at ``branch I'' to drive initial conditions on the stable limit cycle to the origin and suppress downstream disturbance amplification, as illustrated in Figure~\ref{fig:Controlled_and_uncontrolled_CGLE_equation}(b). The complex-valued control input \(u\) acts through an upstream Gaussian actuator kernel located upstream as: 
\begin{equation}
    b(x)=\exp\!\left[-\left(\frac{x-{x_1}}{s}\right)^2\right].
\end{equation}
 The measurement (output) matrix $C$ for output $y$ is a Gaussian kernel function: 
\begin{equation}
    C(x)=\exp\!\left[-\left(\frac{x+x_2}{s}\right)^2\right] \qquad    y(q)=\int_{-\infty}^{\infty} C(x)\,q(x)\,dx,
\end{equation}
where $s=1.6$ and $x_1 , x_2 = -\sqrt{\frac{-2(\mu_0 - 0.2^2)}{\mu_2}}$. Note that $x_1$ is the location of ``branch I'' upstream and $-x_2$ is the location of ``branch II'' downstream of the unstable region. See \cite{Ilak2010, Bagheri2009} for more details. Following \cite{Ilak2010, Bagheri2009}, a Gauss--Hermite pseudospectral discretization of \eqref{eq:CGLE_Equation}, treated as a coupled system of real PDEs due to the non-holomorphic nonlinearity, yields a finite-dimensional full-order model of the form,
\begin{align}
\dot{x} &= Ax + f_N(x) + Bu \\
y &= Cx.
\end{align}
The state $x(t) \in \mathbb{R}^{440}$ is constructed to have Euclidean norm equal to the $L^2(\R)^2$-norm of the function it represents; $f_N$ is a cubic nonlinearity; and $u(t),y(t) \in \mathbb{R}^{2}$ denote the input and output vectors, respectively, each containing the real and imaginary components of the corresponding quantities.
\begin{figure}[t]
    \centering
    \subfloat[Uncontrolled CGL model]{%
        \includegraphics[width=0.49\linewidth]{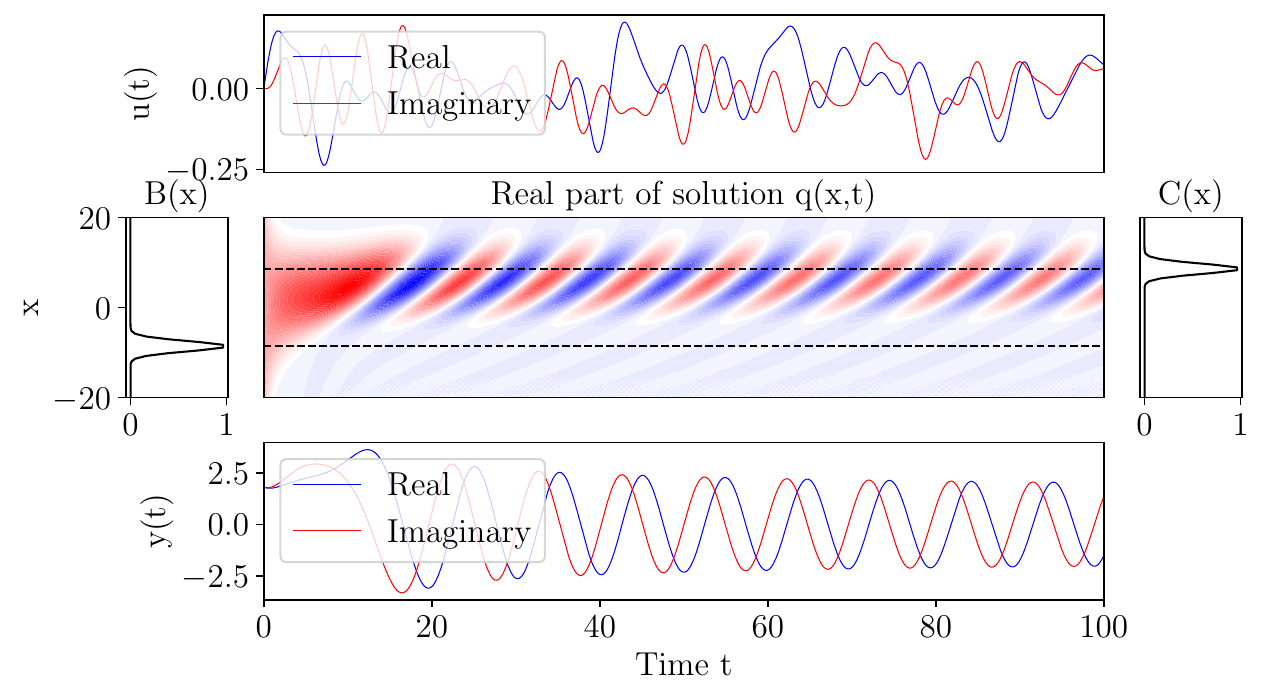}
    }
    \hfill
    \subfloat[Controlled CGL model]{%
        \includegraphics[width=0.49\linewidth]{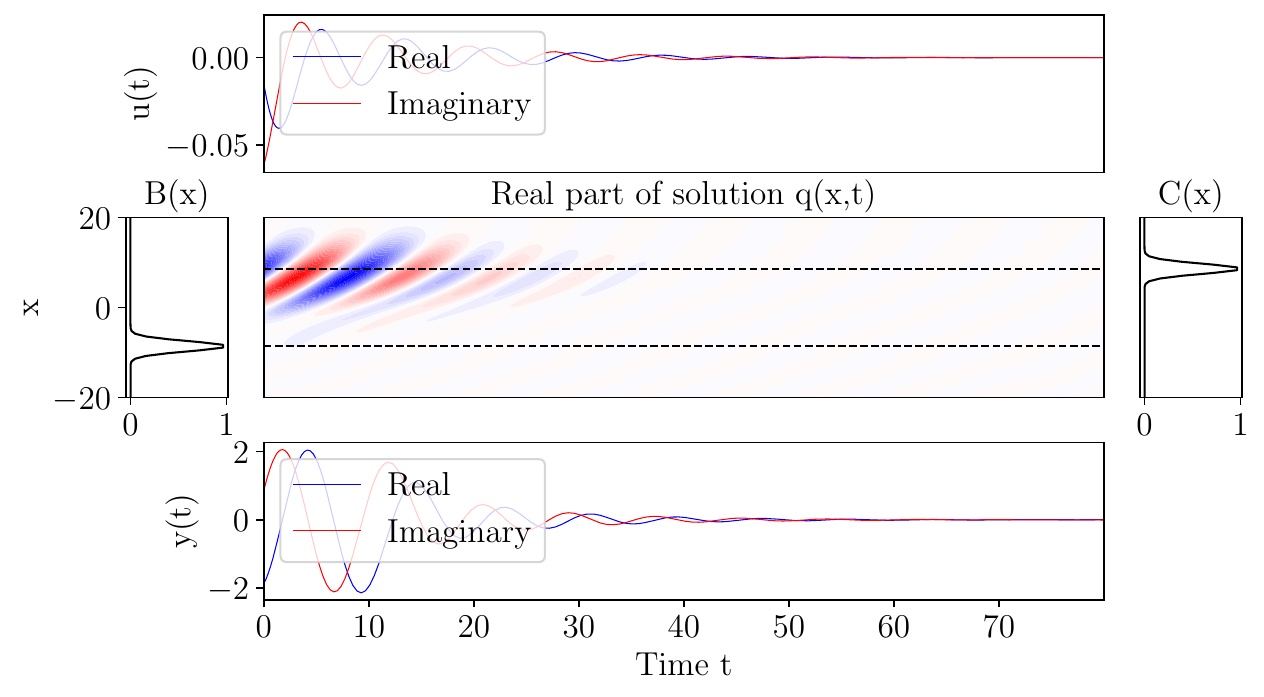}
    }
    \caption{In (a) we show the uncontrolled CGL equation exhibiting a large downstream response due to upstream disturbances, and in (b) we show the controlled CGL equation under optimal control forcing upstream.}
    \label{fig:Controlled_and_uncontrolled_CGLE_equation}
\end{figure}
To construct BASOC, POD, and POD Adjoint modes, we perform trajectory optimization over $t\in[0,80]$ from $50$ random initial conditions. These initial conditions are generated by placing independent zero-mean Gaussian coefficients with variance $(0.15)^2$ on the first $100$ $L^2(\R)$-orthonormal Gauss--Hermite functions for each component --- real and imaginary --- of the solution.
\begin{figure}[H] 
    \centering
    \includegraphics[width=0.55\linewidth]{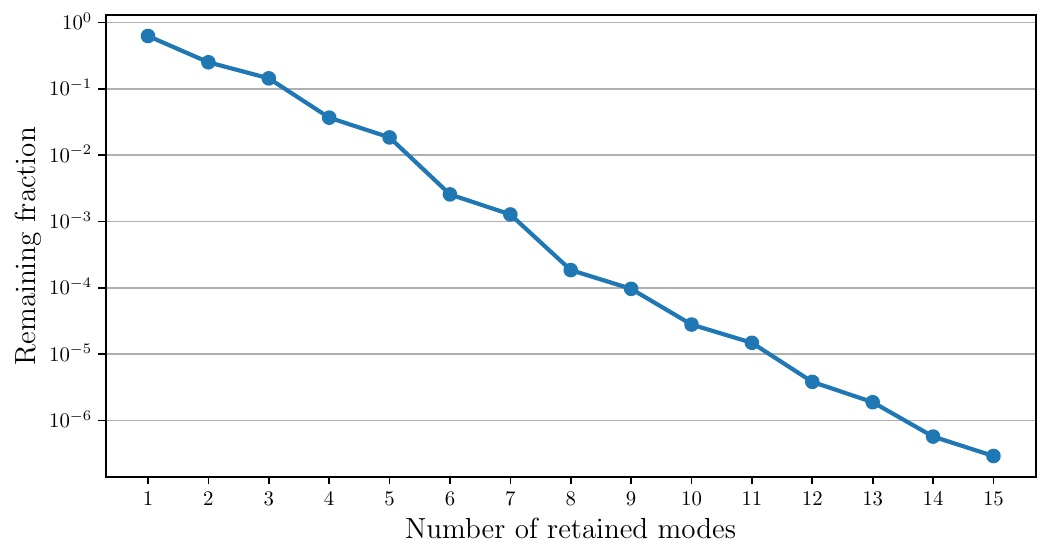}
    \caption{Remaining fraction $\sum_{\ell=r+1}^n \sigma_{\ell}^2 / \sum_{\ell=1}^{n} \sigma_{\ell}^2$ of the total squared BASOC singular values as a function of the number $r$ of retained modes for the CGL equation.}
    \label{fig:BASOC_singular_values_CGLE}
\end{figure}

Figure~\ref{fig:BASOC_singular_values_CGLE} shows that the BASOC singular values exhibit rapid decay, suggesting that the value function can be well approximated using a small number of balanced active variables.
We visualize the value function over the ensemble of training trajectories in the leading three coordinates selected by different methods in Figure~\ref{fig:value_function_projection}.
Here BT, POD Adjoint, and BASOC coordinates all separate high and low value function values, while the POD coordinates do not. Because the equilibrium at $q=0$ in the CGL equation is unstable at the chosen value of $\mu_0$, we used the balancing method for unstable systems in \cite{zhou1999balanced} to compute the BT projections. To evaluate these projections on control-relevant tasks, we test on 10 unseen random initial conditions. 
\begin{figure}[t]
    \centering
    \subfloat[3D POD State subspace]{%
        \includegraphics[width=0.49\linewidth]{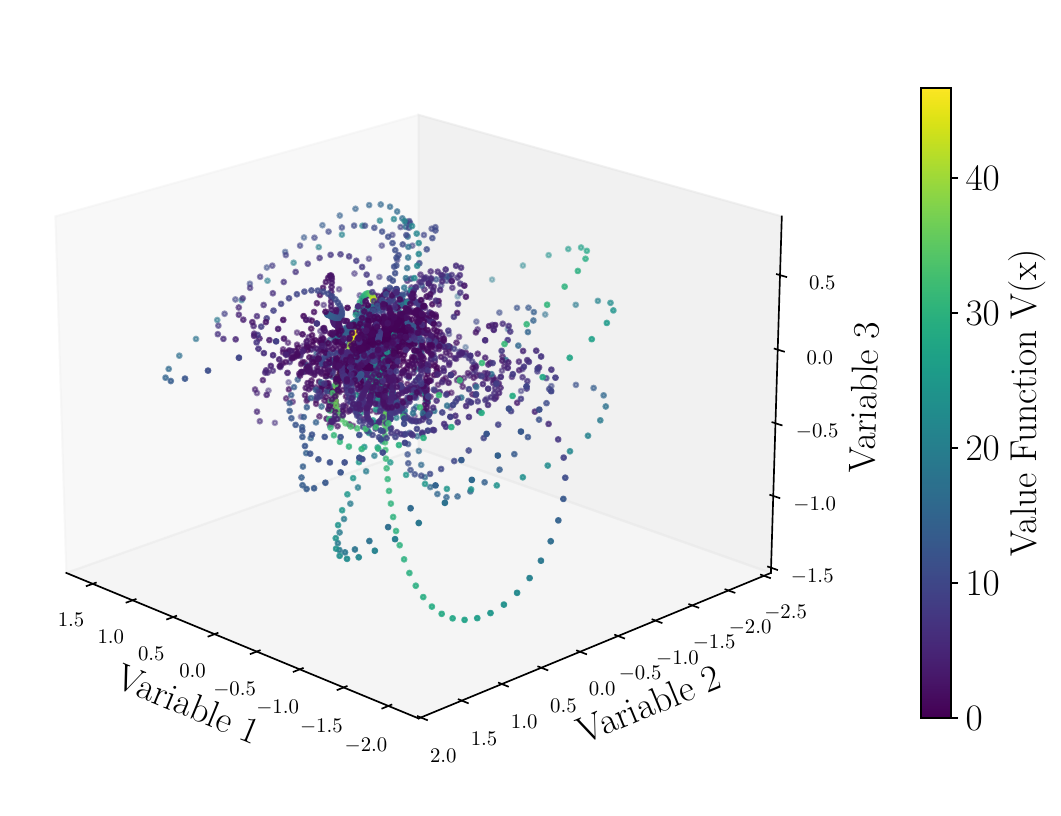}
    }
    \hfill
    \subfloat[3D BT subspace]{%
        \includegraphics[width=0.49\linewidth]{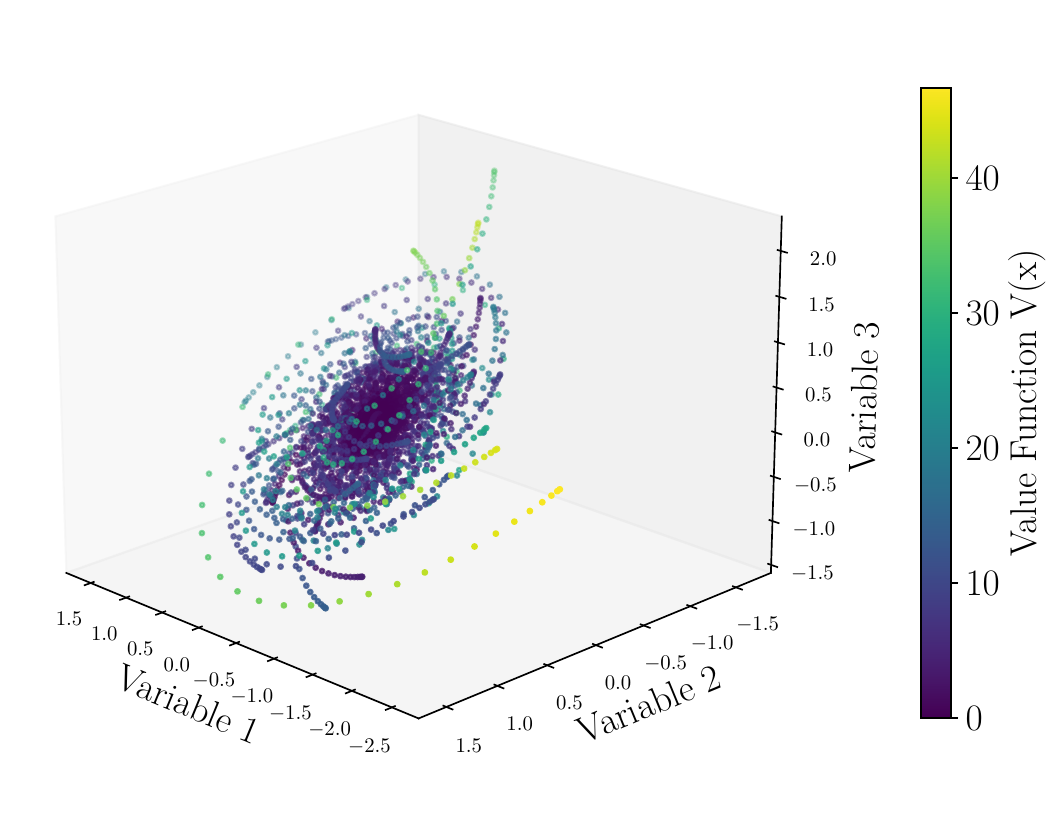}
    }

    \medskip

    \subfloat[3D POD Adjoint subspace]{%
        \includegraphics[width=0.49\linewidth]{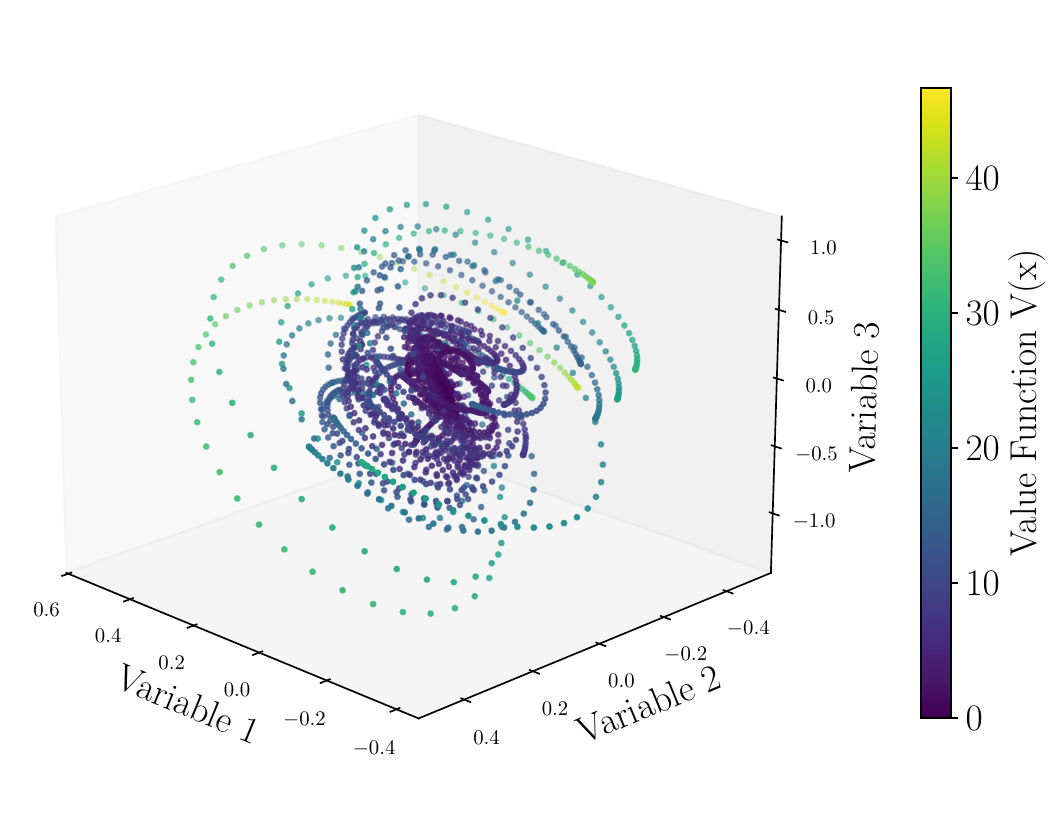}
    }
    \hfill
    \subfloat[3D BASOC subspace]{%
        \includegraphics[width=0.49\linewidth]{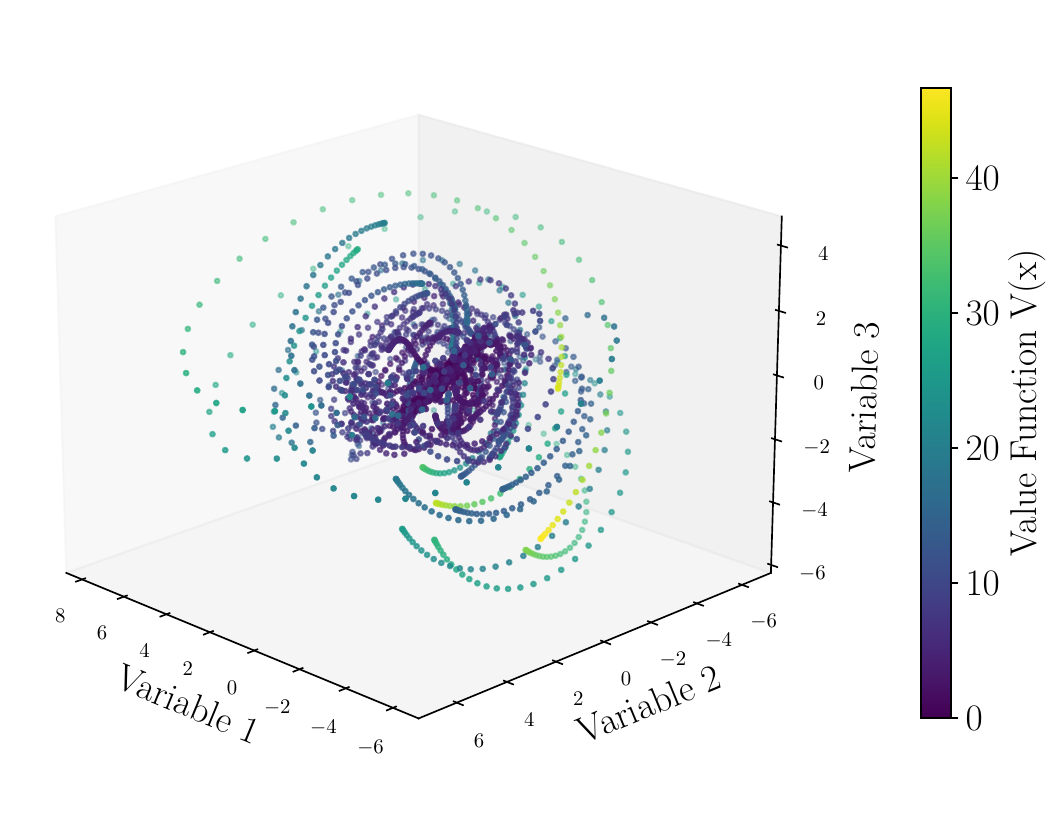}
    }
    \caption{Visualization of the value function for all trajectories on three-dimensional subspaces for the different methods. The colorbar indicates the value function \(V(x)\).}
    \label{fig:value_function_projection}
\end{figure}

\subsection{Full-state feedback control} 
\label{Full_state_feedback}

We evaluate surrogate full-state feedback control laws $u = \kappa_r(\Psi_r^T x)$ obtained by fitting $\kappa_r$ to the optimal trajectory data in different reduced coordinate systems $z = \Psi_r^T x$ obtained by BASOC, BT, POD, and POD Adjoint. To evaluate the different coordinates for learning near-optimal feedback laws, we test the surrogates as feedback laws for the full-order model with unseen initial conditions and compare the resulting costs to the (locally) optimal costs obtained by applying FBSM separately to each initial condition.
Specifically, we use the unseen initial conditions in \cref{subsec:toy_problem_compute_projections} for the nonlinear toy problem and in \cref{subsec:CGLE_compute_projections} for the Complex Ginzburg--Landau equation, with the corresponding locally optimal reference costs computed using FBSM. These initial conditions are not used for training, predictive evaluation, or hyperparameter selection. In order to assess whether dimensionality reduction improves or worsens the feedback law approximations, we also compare to surrogate models $u = \kappa(x)$ fit directly to the full-state optimal trajectory data.

The surrogate models $\kappa_r$ and $\kappa$ were fit using Gaussian process regression with a Matérn kernel having smoothness parameter $\nu_{\mathrm{GP}}=1.5$, scaled by a constant kernel, and an additive white-noise kernel. In our numerical experiments, we also tested Matérn kernels with smoothness parameters $\nu_{\mathrm{GP}}=0.5$ and $\nu_{\mathrm{GP}}=2.5$, but the results with $\nu_{\mathrm{GP}}=1.5$, provide a representative comparison among the methods.
We train the Gaussian-process surrogate using $2,400$ randomly-drawn samples from training trajectories, which are also used for kernel hyperparameter optimization using Scikit-learn \cite{scikit-learn}.
Alternatively, the training data points could be selected using the randomly pivoted Cholesky method \cite{chen2025randomly}.
We evaluate the predictions on $1,000$ held-out samples that are not used during fitting. Predictive accuracy is assessed using the coefficient of determination $(R^2)$ on both the training and held-out sets. The final control-performance results are tested using the unseen initial conditions described above.
\begin{figure}
    \centering
    \subfloat[]{%
        \includegraphics[width=0.45\linewidth]{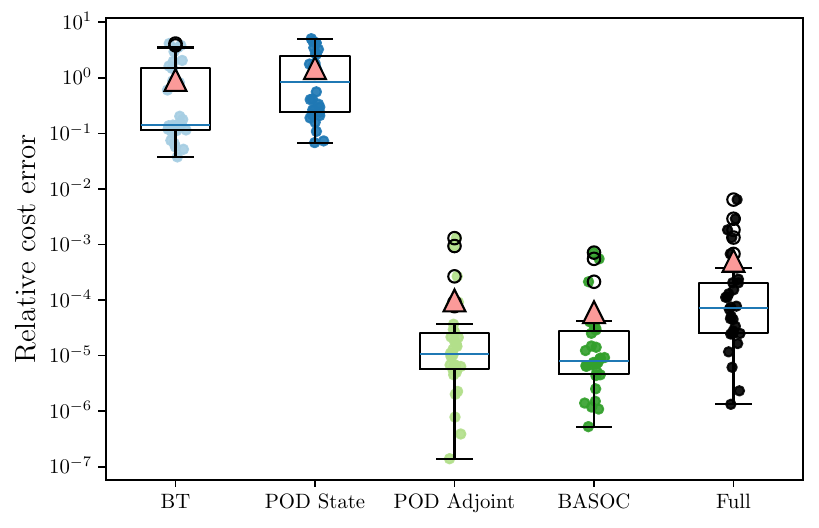}
    }
    \hfill
    \subfloat[]{%
        \includegraphics[width=0.45\linewidth]{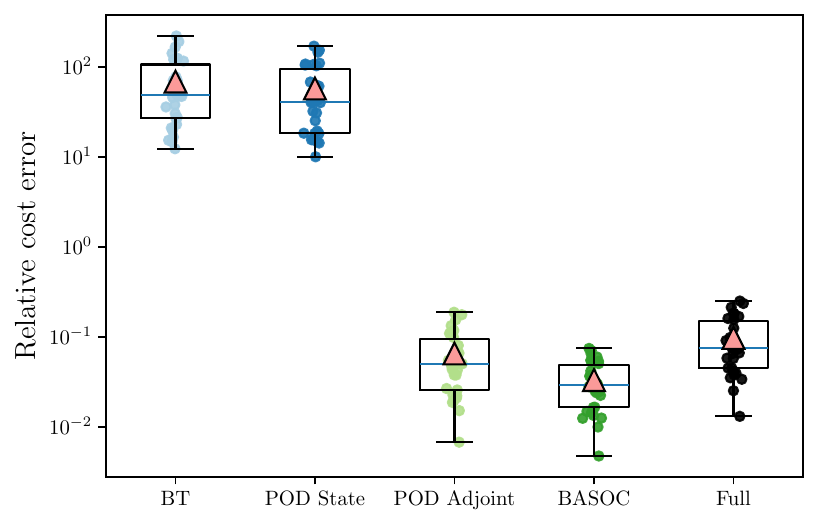}
    }
    \caption{Closed-loop full-state feedback using surrogate control laws for the challenging nonlinear toy model: (a) non-extrapolated test initial conditions; (b) extrapolated test initial conditions.}
    \label{fig:toy_model_full_state_feedback}
    \vspace{1em}
    \includegraphics[width=0.55\linewidth]{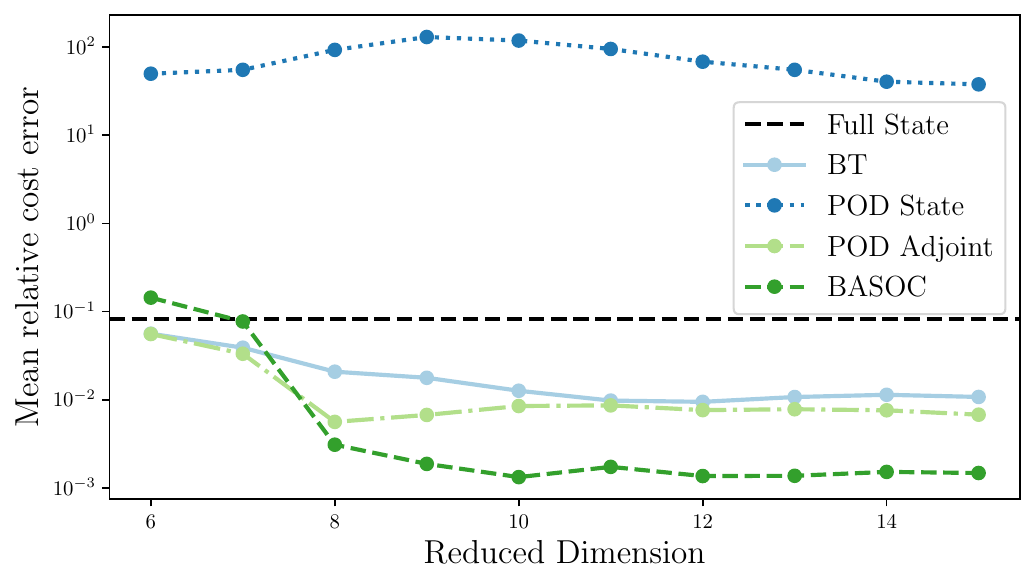}
    \caption{Mean relative control cost error for CGL equation full-state feedback, comparing BASOC with other methods. The dashed black line denotes the full-state surrogate baseline.}
    \label{fig:cgle_fullstate_feedback_cost}
\end{figure}

We quantify the surrogate control performance using the relative cost error $E=(J-J^*)/J^*$, where $J$ denotes the closed-loop cost achieved by the surrogate on the FOM and $J^*$ is the optimal cost obtained using trajectory optimization on the FOM. 
The results in Figure~\ref{fig:toy_model_full_state_feedback} are reported for non-extrapolated and extrapolated initial conditions for the toy model in Section~\ref{subsec:toy_problem_compute_projections}.
In both cases BT and state POD perform poorly. Remarkably, the surrogates fit using reduced $r=2$-dimensional BASOC and POD Adjoint coordinates outperform feedback surrogates fit using the full $3$-dimensional state.
In Figure~\ref{fig:cgle_fullstate_feedback_cost} for the CGL model, we also see that surrogate feedback laws fit in suitable reduced coordinates can dramatically outperform surrogate feedback laws fit using the full state.
The best-performing feedback surrogate, by approximately an order of magnitude in $E$, is obtained using BASOC coordinates over a wide range of reduced dimensions. 

\subsection{Open-loop ROM-based forecasting} To assess the quality of the ROMs in \eqref{eq:Reduced_order_model}, we forecast reduced-state trajectories $z(t)$ in an open-loop setting using optimal control inputs obtained via trajectory optimization of the FOM for test initial conditions. The forecasted reduced-state trajectory is then reconstructed via the trial basis $\Phi_r$ for comparison with optimal state trajectories from the FOM.

We compare $r=2$-dimensional Petrov-Galerkin ROMs of the toy model \eqref{eq:Toy_model_state_space} obtained by BASOC, POD, and POD Adjoint using the training ensemble of optimal trajectories, and BT about the equilibrium at the origin. 
We evaluate the ROMs on extrapolated and non-extrapolated initial conditions introduced in Section~\ref{subsec:toy_problem_compute_projections} for the toy model. 
Prediction performance with optimal control for a single trajectory and the normalized squared output prediction errors for each trajectory in the test ensemble are shown in Figure~\ref{fig:toy_model_rom_results}(a) and (b) respectively. 
In Figure~\ref{fig:toy_model_rom_results}, BASOC and POD Adjoint have comparable performance, making accurate output predictions with orders of magnitude lower normalized error than ROMs based on BT and POD. 
The accuracy of the POD Adjoint ROM was unexpected because the trial basis is not designed to approximate states.
The BT--Petrov--Galerkin model massively overpredicts the transient growth, while the POD-based model predicts no transient growth. In Figure~\ref{fig:CGL_rom_state_errors} we compare state forecasts by the different Petrov--Galerkin ROMs of the CGL equation \eqref{eq:CGLE_Equation} across several reduced dimensions. Normalized state forecast error is plotted for trajectories in the unseen testing ensemble. Since the nonlinearity in \eqref{eq:CGLE_Equation} is represented as a homogeneous polynomial, the reduced nonlinear term is obtained via its associated polarization map \cite{thomas2014polarization} as a third-order tensor. Because the prescribed open-loop input drives the FOM toward an unstable equilibrium, eventual divergence of the ROM forecast is expected. Accuracy should therefore be assessed by the duration of agreement with the FOM rather than asymptotic stability.

\begin{figure}[!htbp]
    \centering
    \subfloat[Output prediction (single trajectory)]{%
        \includegraphics[width=0.49\linewidth]{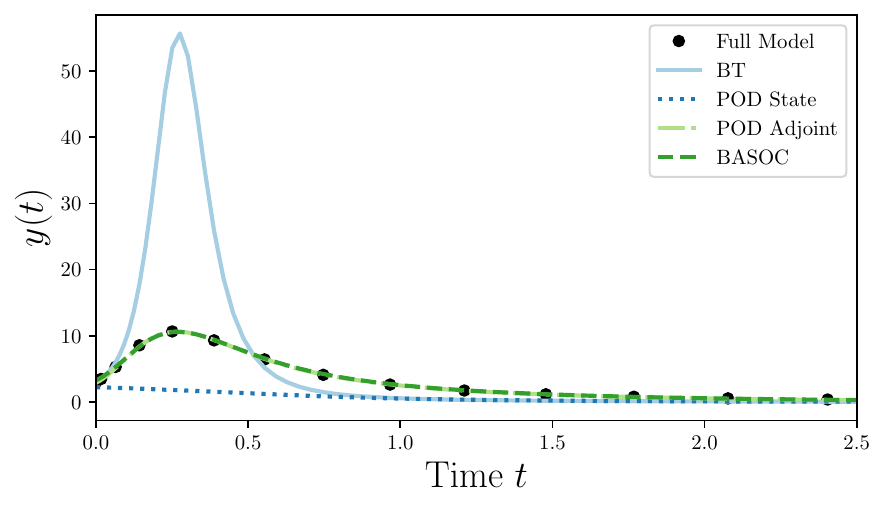}
    }
    \hfill
    \subfloat[Error on multiple test trajectories]{%
        \includegraphics[width=0.49\linewidth]{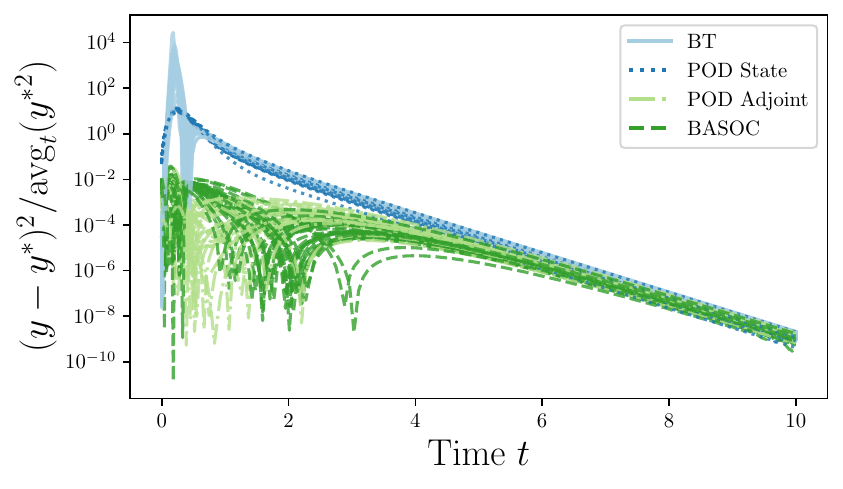}
    }
    \caption{ROM output prediction for the nonlinear toy model: (a) prediction for a single test trajectory;
    (b) normalized squared output prediction errors along optimal trajectories for extrapolated test initial conditions. Similar results are obtained for non-extrapolated initial conditions.}
    \label{fig:toy_model_rom_results}
\end{figure}

\begin{figure}[!htbp]
    \centering
    \includegraphics[width=0.80\linewidth]{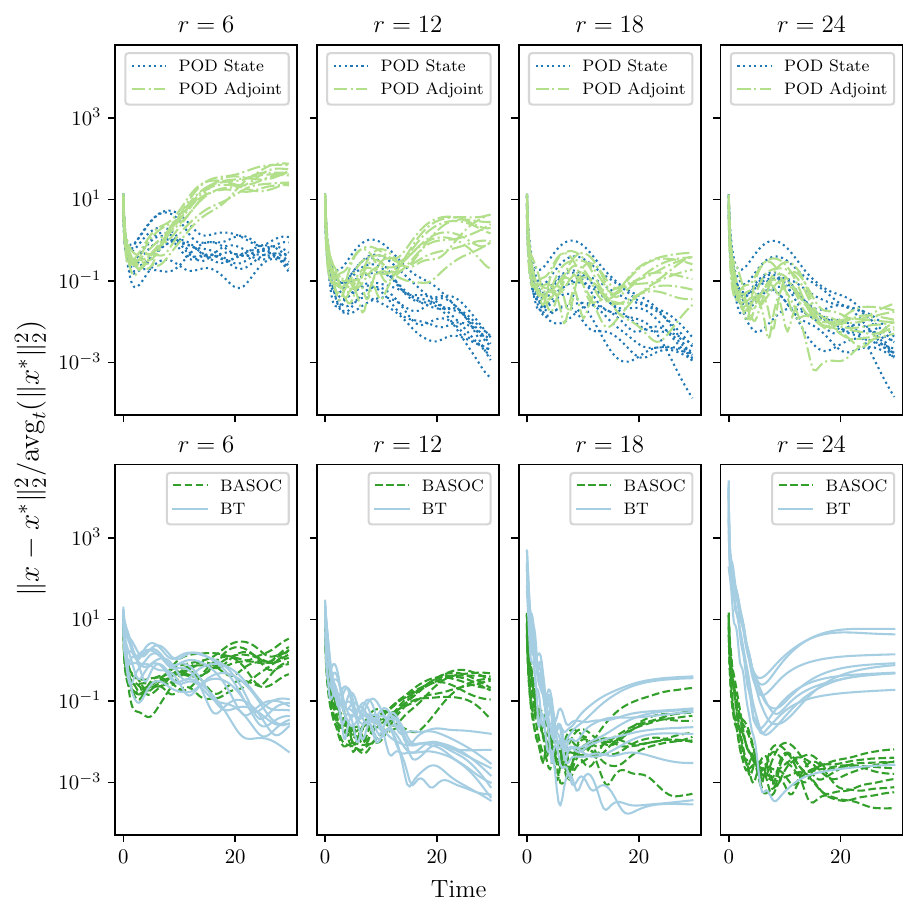}
    \caption{Normalized squared state-forecast error across optimal test trajectories for the nonlinear CGL equation for different reduced dimensions $r$.}
    \label{fig:CGL_rom_state_errors}
\end{figure}

Although BASOC is not uniformly best at lower reduced dimensions, its accuracy improves rapidly with dimension, achieving the strongest overall performance at the highest dimension tested.
 
\subsection{Open-loop state estimation with EKF-ROM}
\label{subsec:open_loop_state_estimation_with_ROM_EKF}
We evaluate different Petrov--Galerkin ROMs \eqref{eq:Reduced_order_model} within a reduced-order extended Kalman filter (EKF) to estimate the FOM's state based on measured outputs at discrete time steps $\Delta t = 0.01$. We evaluate state estimates along the testing ensemble of open-loop optimal trajectories before closing the loop in Section~\ref{subsec:Output_based_control_with_ROM_EKF}.
The EKF formulation and algorithm are provided in Appendix~\ref{app:state_estimation_control} and Algorithm~\ref{alg:output_feedback_rom_ekf}. For this experiment, we omit the control update step in Algorithm~\ref{alg:output_feedback_rom_ekf}.
Specifically, at the $k$th time step we apply the pre-computed open-loop optimal input $u_k = u_k^*$, measure the FOM's output $y_{k+1} = y_{k+1}^*$, and use it to correct the ROM's prediction at the $k+1$st time step.

\begin{figure}[t]
    \centering
    \subfloat[Toy model]{%
        \includegraphics[width=0.49\linewidth]{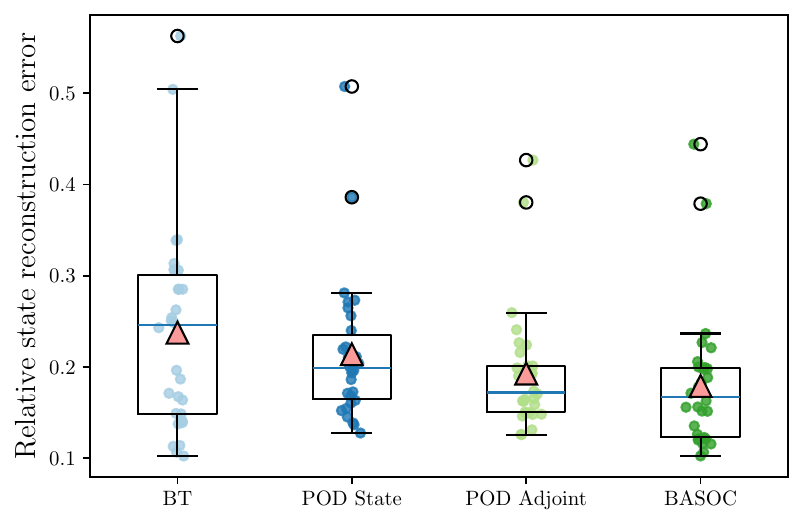}
    }
    \hfill
    \subfloat[CGL equation model]{%
        \includegraphics[width=0.49\linewidth]{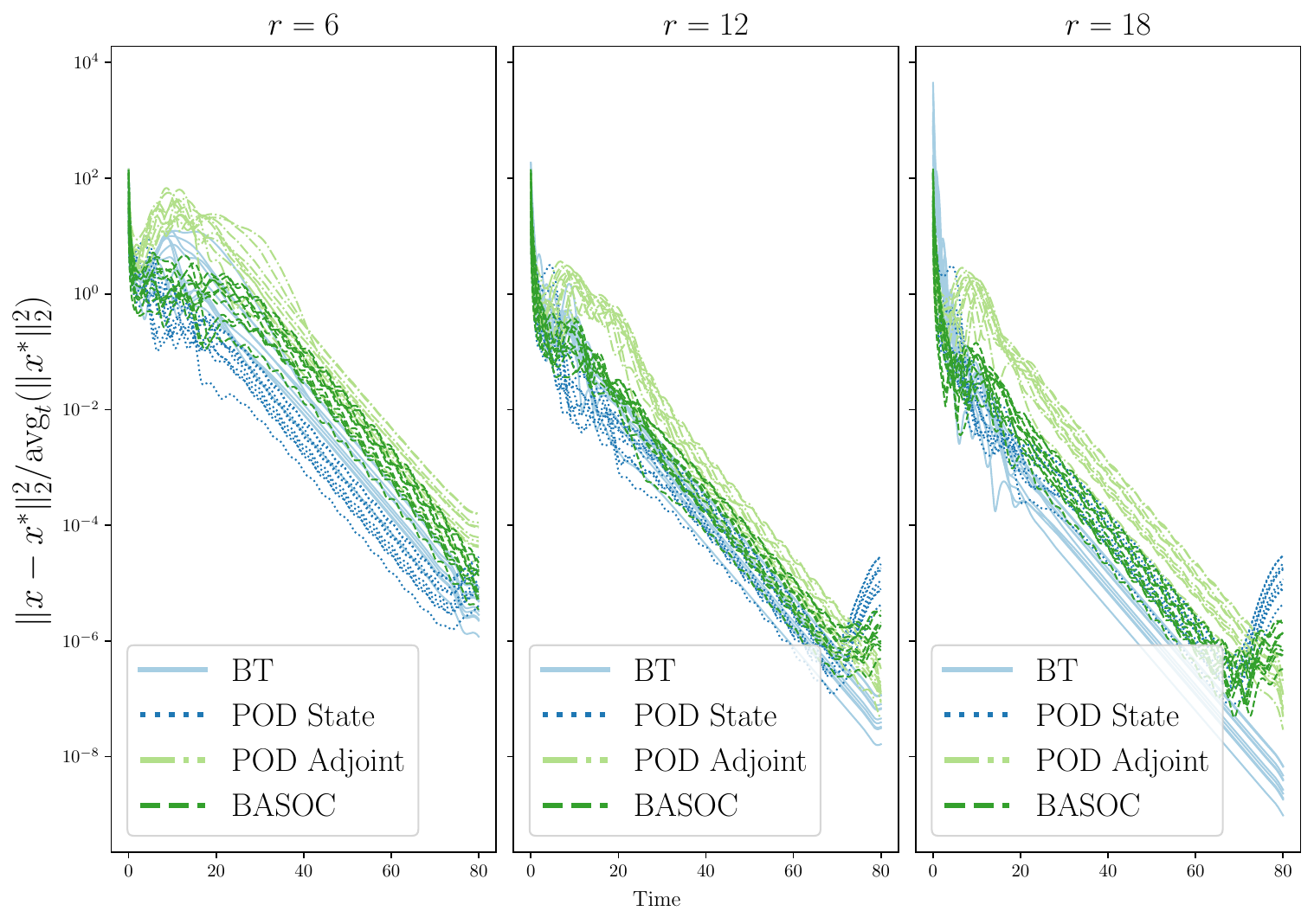}
    }
    \caption{Open-loop reduced-order state estimation using an EKF-ROM. (a) Challenging nonlinear toy model. (b) CGL equation model.}
    \label{fig:open_loop_ROM_EKF_state_estimation_results}
\end{figure}

For the three-dimensional toy model, the state estimation accuracy is evaluated on the same non-extrapolated and extrapolated initial conditions introduced in Section~\ref{subsec:toy_problem_compute_projections}. 
For each initial condition, the relative state-reconstruction error is computed as $\|X_{\mathrm{FOM}}^{*}-\Phi_r Z\|_{F}/\|X_{\mathrm{FOM}}^{*}\|_{F}$, where the columns of $X_{\mathrm{FOM}}^{*}$ are the state snapshots along the optimal FOM trajectory and the columns of $Z$ are the corresponding ROM state estimates. Figure~\ref{fig:open_loop_ROM_EKF_state_estimation_results}(a) shows the relative state-reconstruction error for the different ROMs with $r=2$. EKFs using state POD, POD Adjoint, and BASOC ROMs had similar performance with BASOC yielding slightly lower mean and median estimation errors. 
BT ROMs yielded a wider range of performance with higher mean and median error.
Similar results are obtained for extrapolated test initial conditions. 
We note that state POD and BT show substantial improvements within the EKF compared to open-loop ROM forecasting, likely due to output-based reduced-state corrections.

For the nonlinear CGL equation, Figure~\ref{fig:open_loop_ROM_EKF_state_estimation_results}(b) shows the normalized squared state-reconstruction error over time for test trajectories.
BASOC has excellent performance at early times (roughly $t < 10$) and improves as dimension is increased.
POD has similar performance at early times, with a few outlying trajectories with higher error.
At longer times --- where trajectories are closer to the equilibrium and we expect nonlinear effects to be less significant --- BT ROMs yield the best state estimates, with improvement as dimension is increased.
However, its early time performance can be much worse than BASOC. POD Adjoint has poor performance in the EKF.

\subsection{Output-based closed-loop feedback control via EKF-ROM}
\label{subsec:Output_based_control_with_ROM_EKF}
We now use the reduced-order state estimators validated in Section~\ref{subsec:open_loop_state_estimation_with_ROM_EKF} for output-based closed-loop feedback control of the FOM. The prediction, correction, and control-update steps are detailed in Appendix~\ref{app:state_estimation_control}, while Algorithm~\ref{alg:output_feedback_rom_ekf} details the closed-loop procedure.  We report results in terms of the relative cost error $E$ defined in Section~\ref{Full_state_feedback} with respect to the FBSM cost.

\begin{figure}[H]
    \centering
    \subfloat[Non-extrapolated initial conditions]{%
        \includegraphics[width=0.49\linewidth]{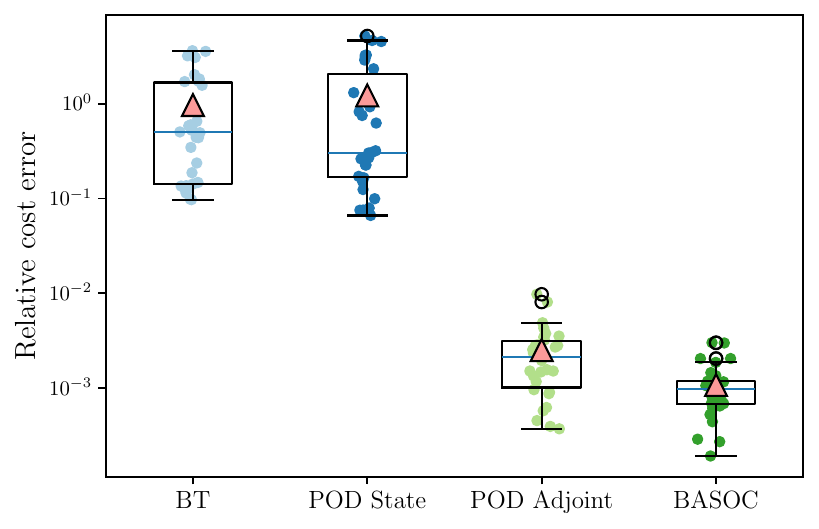}
    }
    \hfill
    \subfloat[Extrapolated initial conditions]{%
        \includegraphics[width=0.49\linewidth]{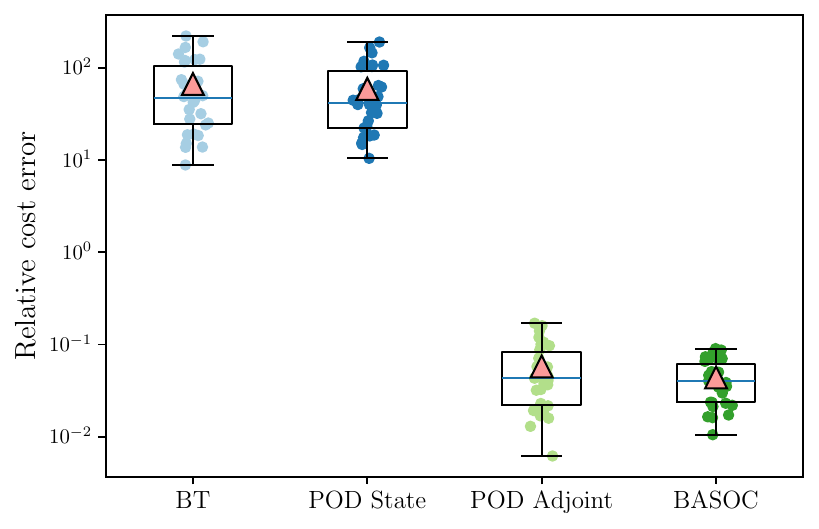}
    }
    \caption{Output-based closed-loop feedback control for the challenging nonlinear toy model for (a) non-extrapolated and (b) extrapolated test initial conditions.}
    \label{fig:output_based_closed_loop_feedback_toy_model}
\end{figure}

\begin{figure}[!htbp]
    \centering
    \includegraphics[width=0.55\linewidth]{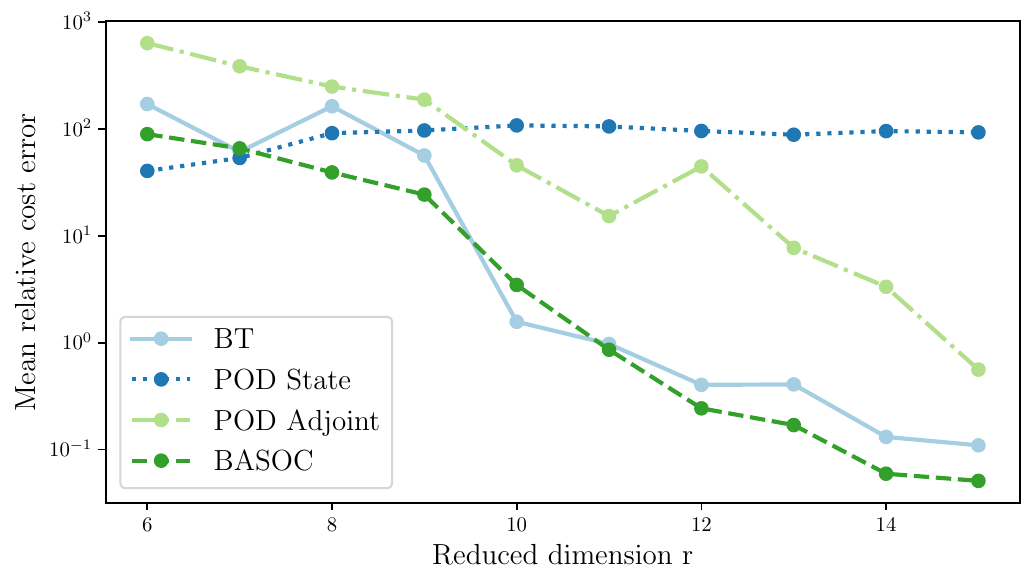}
    \caption{Output-based closed-loop feedback control using the EKF and surrogate control law for the CGL equation model.}
    \label{fig:output_based_closed_loop_control_CGLE}
\end{figure}

For the challenging nonlinear toy model, Figure~\ref{fig:output_based_closed_loop_feedback_toy_model} shows the results for output-based closed-loop feedback control. The same initial conditions introduced in Section~\ref{subsec:toy_problem_compute_projections} are used for this control task. Among the methods, BASOC achieves the lowest mean relative cost error, with the cost error computed relative to that of trajectory optimization. BT and state POD perform poorly on this problem.

For the nonlinear CGL equation, Figure~\ref{fig:output_based_closed_loop_control_CGLE} shows the results for output-based closed-loop feedback control using the different methods across reduced dimensions.
Across the reduced dimensions, the BASOC compensator yields either best or near best performance among the tested methods, with BT yielding comparable overall results. 
The poor performance of the POD-based compensator is likely due to its poorly-performing feedback surrogate (see Section~\ref{Full_state_feedback}), whereas the poor performance of POD Adjoint is likely due to its poorly-performing state estimator (see Section~\ref{subsec:open_loop_state_estimation_with_ROM_EKF}).

\section{Conclusion}
We have introduced Balanced Active Subspaces for Optimal Control (BASOC), a model reduction framework for the value function of high-dimensional nonlinear optimal control problems. BASOC yields an oblique projection that seeks to jointly capture directions in state space along which the value function is most sensitive and the distribution of states exhibits large variance. We explain several ways that this projection can be used for reduced-order modeling and control, and we evaluate some of these strategies on numerical examples.
For linear-quadratic optimal control problems we show that the BASOC becomes a snapshot-based approximation of initial condition balancing of the optimal closed-loop system.

We demonstrate the framework on a challenging nonlinear toy model and the nonlinear CGL equation, where we evaluate lower-dimensional approximations for the value function, optimal control law, dynamical system, and their combination into low-order compensators for output-based closed-loop control.
These numerical experiments show that optimal value functions and feedback laws are well approximated by BASOC reduced coordinates, whereas the other methods can yield substantially poorer approximations on the considered examples. 
Our experiments also show that POD on optimal adjoint snapshots --- without balancing against state variance --- can provide accurate approximations of the value function.
However, it may yield poor reduced-order models in much higher dimensions as the states generally do not lie in the span of the gradients.
Our results show that BASOC-based ROMs can provide accurate forecasts with improvements over balanced truncation and POD in nonlinear regimes.
While BASOC had competitive performance in all tasks, it showed the largest advantages of POD and balanced truncation on the three-dimensional toy model, which has strongly nonlinear dynamics specifically designed to defeat POD and balanced truncation.

Future work will extend BASOC to test for, and exploit balanced active subspaces in larger-scale nonlinear flow control problems. 
We will also leverage second-order value function information obtained from differential Riccati equations along optimal trajectories to identify active subspaces directly for feedback control laws, which depend on the gradient of the value function. 
Finally, we will seek to establish performance guarantees for BASOC-based ROMs and control laws.

\vspace{0.75em}
\noindent\textbf{Code availability:} Our code was written in Python and is available at \url{https://github.com/amehhh/BASOC}.

\appendix

\section{Forward--Backward Sweep Algorithm}
\begin{algorithm}[H]
\caption{Gradient-Based Trajectory Optimization (FBSM)}
\label{alg:Trajectory_optimization}
\begin{algorithmic}[1]
\STATE Initialize the weighting matrices $\widetilde{Q}$ and $R$
\STATE Choose $N$ quadrature nodes and a sufficiently large time horizon $\hat{T} \gg 1$
\STATE Obtain left Gauss-Radau quadrature nodes $\{t_i\}_{i=1}^N$ and weights $\{w_i\}_{i=1}^N$ on $[0,\hat{T}]$
\vspace{-1em}
\STATE Set $k=0$ and choose a stopping tolerance $\epsilon \ll 1$. Total number of $k$ iterations determined by optimizer.
\STATE Control roll-out: Choose an initial control trajectory $u^{(0)}(\cdot)$ and interpolate.
\STATE Specify the solver relative and absolute tolerances.
\WHILE{$\|\nabla J(u^{(k)}(\cdot))\| > \epsilon$}
\STATE Forward sweep: Simulate the system dynamics in \eqref{eq:forward_dynamics_state_equation} from $x_{0}$ and interpolate the solution to obtain state trajectory $x^{(k)}(\cdot)$.
\STATE Compute the cost functional $J(u^{(k)}(\cdot))$ in \eqref{eq:Quadratic_cost_function} approximated with quadrature.
\vspace{-1em}
\STATE Backward sweep: Simulate the adjoint dynamics in \eqref{eq:backward_dynamics_adjoint_equation} backward in time with terminal condition $\lambda(\hat{T})=0$ to obtain the adjoint trajectory $\lambda^{(k)}(\cdot)$
\STATE Compute the gradient of the cost functional in \eqref{eq:closed_form_gradient} $\nabla_u J(u^{(k)}(\cdot))$ with respect to the control parameters via quadrature approximation. 
\STATE Optimization update step: Pass $J(u^{(k)}(\cdot))$ and $\nabla_u J(u^{(k)}(\cdot))$ to an optimizer (e.g., BFGS or L-BFGS) to obtain the updated control $u^{(k+1)}(\cdot)$.
\STATE Update $k \gets k+1 $
\ENDWHILE
\STATE Set $u^*(\cdot)$ to the converged control trajectory.
\STATE Solve the forward and backward dynamics using $u^*(\cdot)$ to obtain $x^*(\cdot)$ and $\lambda^*(\cdot)$.
\STATE Compute the optimal cost $J^*$ along the optimal trajectory using \eqref{eq:Quadratic_cost_function}.
\STATE Compute $V(\cdot)$ along the optimal trajectory using \eqref{eq:value_function_backward_quadrature}.
\RETURN Optimal cost $J^*$, value function $V(\cdot)$, optimal control $u^*(\cdot)$, optimal state $x^*(\cdot)$, and optimal adjoint $\lambda^*(\cdot)$.
\end{algorithmic}
\end{algorithm}

For the optimization problem in the nonlinear case, we obtain a closed-form expression for the gradient of the cost functional with respect to the optimization parameters using an adjoint formulation \cite{giles2000adjoint}. With the adjoint equation in \eqref{eq:backward_dynamics_adjoint_equation} and the quadratic cost functional as defined in \eqref{eq:Lagrange_cost_functional}, \eqref{eq:Quadratic_cost_function} the closed form is given as:
\begin{equation}
% DJ(u(\cdot))[v(\cdot)] = \int_{0}^{\infty}\!\!\!\!\!\!\!\!\!
% \underbrace{\left( \nabla_u J(u(t)) \right)^{\top}}_{\displaystyle \nabla_u L(x(t),u(t)) + B^{\top}\lambda(t)} \!\!\!\!\!\!\!\!\!\!v(t)\, dt.
DJ(u(\cdot))[v(\cdot)] = \int_{0}^{\infty}
\underbrace{\left[\nabla_u L(x(t),u(t)) + B^{\top}\lambda(t)\right]}_{\left( \nabla_u J(u(t)) \right)^{\top}} v(t)\, dt.
\label{eq:closed_form_gradient}
\end{equation}

% DJ(u(\cdot))[v(\cdot)]
% =
% \left\langle \nabla_u J(u(\cdot)),\,v(\cdot)\right\rangle
We employ left Gauss-Radau quadrature with a linearly transformed domain to approximate the continuous integrals as suggested in \cite{ross2012pseudospectral}. Its inclusion of left end-point node on the discretization domain enforces the initial condition for infinite-horizon optimal control problems. 
% See \cite{garg2010unified} for accuracy and convergence properties of high-order Gaussian quadrature transcription methods in optimal control.
High-order gaussian quadrature also provides an accurate integration with fewer nodes, reducing the computational expense of the optimization problem and Monte-Carlo estimation of the covariance matrices. We employ Barycentric interpolation due to its stability and compatibility with the Legendre polynomial framework of Gaussian quadrature methods \cite{Wang2012}. In our numerical experiments, we employ the Scipy \cite{Virtanen2020} BFGS \cite{Nocedal2006} quasi-newton optimizer with a tolerance of $10^{-6}$ and gradient tolerance of $10^{-8}$. The optimization stops when a convergence criterion is met. 
 \begin{figure}[t]
    \centering
    \subfloat[Optimization convergence (Toy model)]{%
        \includegraphics[width=0.49\linewidth]{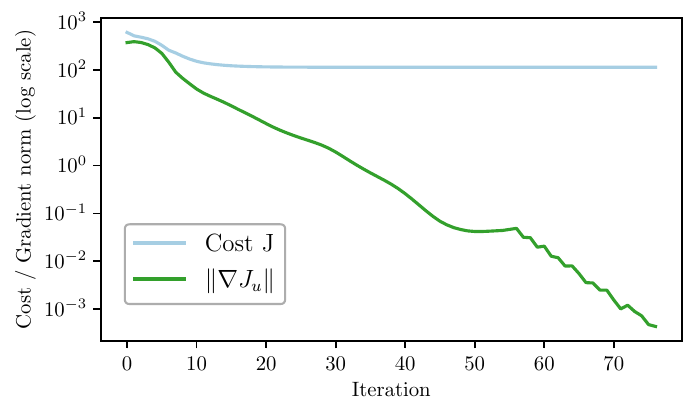}
    }
    \hfill
    \subfloat[Optimization convergence (CGL model)]{%
        \includegraphics[width=0.49\linewidth]{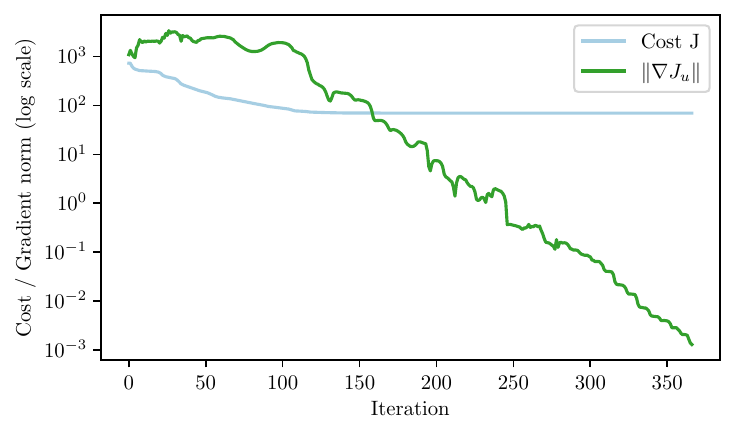}
    }
    \caption{Convergence of the cost function and its gradient norm during trajectory optimization for (a) the nonlinear toy model and (b) the CGL model.}
    \label{fig:Trajectory_optimization_results}
\end{figure}
Our implementation of trajectory optimization, whose convergence on representative trajectories is plotted in Figure~\ref{fig:Trajectory_optimization_results}, required $\approx1$ minute per trajectory for the nonlinear toy model and $\approx15$ minutes per trajectory for the nonlinear CGL model. Optimization over multiple trajectories is embarrassingly parallel. All numerical experiments were conducted on a MacBook Pro equipped with an Apple M4 Pro chip and 24 GB of unified memory. The value function $V(x)$ at any node $t_i$ along the optimal trajectory is computed with quadrature as:
\begin{equation}
V(x(t_i))
\approx
\sum_{s=i}^{N-1} w_s\,L\left(x^*(t_s),u^*(t_s)\right).
\label{eq:value_function_backward_quadrature}
\end{equation}

\section{State Estimation and Output-Based Closed-Loop Control}
\label{app:state_estimation_control}
\vspace{0.1em}

% For both the open-loop state-estimation and output-based closed-loop control experiments, the EKF is updated at discrete times \(t_k=k\Delta t\), with \(\Delta t=\text{[insert numerical value]}\). In the output-based feedback experiments, the control input is held constant over each interval \([t_k,t_{k+1})\), i.e., \(u(t)=u_k\), and is updated after incorporating the measurement at \(t_{k+1}\). The same piecewise-constant control is used when propagating the FOM and ROM dynamics.
This appendix presents the EKF-ROM formulation and algorithm used for the open-loop state-estimation experiment in Section~\ref{subsec:open_loop_state_estimation_with_ROM_EKF} and the output-based closed-loop control experiment in Section~\ref{subsec:Output_based_control_with_ROM_EKF}. The same estimator is used in both settings, with the open-loop distinction specified in its respective section. 

In both experiments, we use a fixed time step of $\Delta t=0.01$ for the discrete-time EKF. In particular, we do the following : Let $F_r^{\Delta t}(z,u) = z(t+\Delta t), \quad z\in\mathbb{R}^r$ denote the discrete-time flow map of the ROM \eqref{eq:Reduced_order_model} with a constant control input $u \in \mathbb{R}^m$ over a time interval of length $\Delta t$. We approximate it using SciPy’s RK45 integrator (version 1.16.2) \cite{Virtanen2020}. In the prediction step, the a priori state is obtained by propagating the a posteriori state estimate via the ROM dynamics as:
\begin{equation}
\hat{z}_{k+1}^{-} = F_r^{\Delta t}(\hat{z}^+_k, u_k).
\label{eq:propagte_rom_dynamics}
\end{equation}
To propagate the a posteriori state-estimation error covariance matrix $\hat{P}_k^{+} \in \mathbb{R}^{r \times r }$, let: 
\begin{equation}
    J_k^z = \left. \frac{\partial F_r^{\Delta t}(z,u)}{\partial z} \right|_{(\hat{z}^+_k,u_k)},
    \label{eq:ROM_Jacobian}
\end{equation}
denote the Jacobian of the ROM discrete-time flow map evaluated at $(\hat{z}_k^+,u_k)$. Using a Euler forward discretization for propagating the ROM dynamics and a sufficiently small $\Delta t$, we obtain a first-order approximation of the discrete-time flow map Jacobian as :
\begin{equation}
J_k^z \approx I_r + \Delta t \left. \frac{\partial f_r(z,u)}{\partial z} \right|_{(\hat{z}^+_k,u_k)}.
\label{ROM_flow_Jacobian(Euler)}
\end{equation}
Let $h_r:\mathbb{R}^r \rightarrow \mathbb{R}^p$ denote the linear ROM output function. Its Jacobian is 
\begin{equation}
H = \frac{\partial h_r(z)}{\partial z} \in \mathbb{R}^{p\times r}
\label{eq:Jacobian_ROM_output_function}
\end{equation}
We assume that the process and measurement noises are
uncorrelated, zero-mean Gaussian random processes with covariance matrices $\hat{Q}$ and $\hat{R}$, respectively,
following \cite{Schneider2013}. In addition, the initial reduced state estimate is set to $\hat{z}_0^+ = \Psi_r^T x_0$, while the estimation error covariance matrix $\hat{P}_0^+$ is initialized following \cite{Schneider2013}. The error covariance update is obtained using the Joseph stabilized form to ensure it preserves symmetry and positive semidefiniteness \cite{Schneider2013}. See \cite{Farrell2001ReducedKF} for reduced state estimation of linear systems. 

\begin{algorithm}[H]
\caption{Output-based closed-loop feedback control via EKF-ROM}
\label{alg:output_feedback_rom_ekf}
\begin{algorithmic}[1]
\STATE Choose the process-noise covariance matrix
$\hat{Q} \in \mathbb{R}^{r \times r}$ and measurement noise covariance $\hat{R} \in \mathbb{R}^{p \times p}$.
\STATE Choose a small time step $\Delta t=0.01$, define number of time steps as $n_t=\hat{T}/\Delta t$.
\STATE Initialize the a posteriori state-estimation error covariance matrix as $\hat{P}_{0}^{+}\in\mathbb{R}^{r\times r}$.
\vspace{-1.2em}
\STATE Initialize a posteriori state $\hat{z}_0^+\in\mathbb{R}^r$ and initial control $u_0=\hat{u}^{*}(\hat{z}_0^+)$.
\FOR{$k=0,\ldots,n_t-1$}
    \STATE Propagate the FOM dynamics from time step $k$ to $k+1$ over the small time interval $\Delta t$ using $u_k$ and obtain the FOM measurement $y_{k+1} \in \mathbb{R}^p$.
    \STATE Propagate the a posteriori reduced state estimate through the ROM dynamics from time step $k$ to $k+1$ over the small time interval $\Delta t$ using $u_k$ to obtain the a priori state estimate $\hat{z}_{k+1}^{-}$.
    \STATE Propagate the a posteriori state-estimation error covariance over the small time interval to obtain the a priori covariance: $\hat{P}_{k+1}^{-} = J_k^{z} \hat{P}_k^{+} (J_k^{z})^{T} + \hat{Q}$.
    \STATE Compute the innovation covariance as $S_{k+1}=H\hat{P}_{k+1}^{-}H^{T}+\hat{R}$.
    \STATE Compute the Kalman gain $K_{k+1} \in \mathbb{R}^{r\times p} $ as $K_{k+1}= \hat{P}_{k+1}^{-}H^{T}S_{k+1}^{-1}$.
    \STATE Compute the innovation as $e_{k+1} = y_{k+1} - H\hat{z}_{k+1}^{-}$.
    \STATE Update the state estimate as $\hat{z}_{k+1}^+ = \hat{z}_{k+1}^{-} + K_{k+1} e_{k+1}$.
    \STATE Update the a priori state-estimation error covariance matrix to obtain the a posteriori covariance matrix using the Joseph stabilized form: $ \hat{P}_{k+1}^{+} = (I_r-K_{k+1}H)\hat{P}_{k+1}^{-}(I_r-K_{k+1}H)^{T} + K_{k+1}\hat{R}K_{k+1}^{T}$.
    \STATE Update the control input with near-optimal control from surrogate control law evaluated at the a posteriori state estimate $u_{k+1} = \hat{u}^*\!\left(\hat{z}_{k+1}^{+}\right)$.
\ENDFOR
\end{algorithmic}
\end{algorithm}

\bibliographystyle{siamplain}
\bibliography{references}
\end{document}